\newcommand{\NN}{\mathbb{N}}
\newtheoremstyle{dotless}{}{}{\itshape}{}{\bfseries}{}{ }{} 
\theoremstyle{dotless}
\newtheorem{theorem}{Theorem}[section]
\newtheorem{lemma}[theorem]{Lemma}
\newtheorem{prop}[theorem]{Proposition}
\newtheorem{ques}[theorem]{Question}
\newtheorem{corollary}[theorem]{Corollary}
\newtheorem{example}[theorem]{Example}
\begin{document}

\pagenumbering{arabic} \setcounter{page}{1}

\title{On distinct consecutive $r$-differences.}

\author[J. Li]{Junxian Li}\
\address
{University of Illinois Urbana-Champaign,
	Urbana, Illinois 61801}\email{jli135@illinois.edu}
\author[G. Shakan]{George Shakan}
\address
{University of Illinois Urbana-Champaign,
	Urbana, Illinois 61801}\email{george.shakan@gmail.com}

\begin{abstract}
	Suppose $A\subset \mathbb{R}$ of size $k$ has distinct consecutive $r$--differences, that is for $1 \leq i \leq k -r$, the $r$--tuples $$(a_{i+1} - a_i , \ldots , a_{i+r} - a_{i + r -1})$$ are distinct. Then for any finite 
	$B \subset \mathbb{R}$, one has $$|A+B| \gg_r |A||B|^{1/(r+1)}.$$ Utilizing de Bruijn sequences, we show this inequality is sharp up to the constant. 
	
	Moreover, for the sequence $\{n\alpha\}$, a sharp upper bound for the size of the distinct consecutive $r$--differences is obtained, which generalizes Steinhaus' three gap theorem. A dual problem on the consecutive $r$--differences of the returning times for some $\phi \in \mathbb{R}$ defined by $\{T : \{T\theta\}<\phi\}$ is also considered, which generalizes a result of Slater.
\end{abstract}

\subjclass[2010]{Primary: 11B13, 05B10, Secondary: 11B30}
\keywords{additive combinatorics, sumset estimates, convex sets }
\maketitle



\section{Introduction}

Given $A,B \subset \mathbb{R}$ finite, we define the {\em sumset} $$A+B = \{a+b : a \in A , b \in B\}.$$  Let $A = \{a_1 < \ldots < a_k\}.$ We say $A$ is {\em convex} if for all $1 < i < k$$$a_i - a_{i-1} < a_{i+1} - a_i.$$ Hegyv\'{a}ri \cite{He}, answering a question of Erd\"{o}s, proved that if $A$ is convex then $$|A+A| \gg |A| \log |A| / \log \log |A|.$$ Konyagin \cite{Ko} and Garaev \cite{Ga} showed if $A$ is a convex set then $$|A \pm A| \gg |A|^{3/2}.$$  Schoen and Shkredov improved this to $$|A-A| \gg |A|^{8/5} \log^{-2/5} |A| , \ \ \ \ |A+A| \gg |A|^{14/9} \log^{-2/3} |A|,$$ which is the current state of the art. It is conjectured $$|A \pm A| \gg_{\epsilon} |A|^{2 - \epsilon}.$$
Elekes, Nathanson, and Ruzsa \cite{El} showed that for any convex set $A$ and any $B$, \begin{align}\label{convexAB}
	|A+B| \gg |A| |B|^{1/2}.
\end{align} Finally Solymosi \cite[Theorem 1.1]{So} generalized \eqref{convexAB} and showed that if the differences $a_{i+1} - a_i$ are distinct for $1\leq i \leq k-1$, then \begin{align}\label{ThmSo}
|A+B| \gg |A||B|^{1/2},
\end{align} and a construction in the same paper, due to Ruzsa, showed this bound is sharp.
We generalize this result of Solymosi \cite[Theorem 1.1]{So}. 

Fix $r \geq 1$ an integer. We say a set $A$ has {\em distinct consecutive $r$--differences} if for $1 \leq i \leq k -r$, $$(a_{i+1} - a_i , \ldots , a_{i+r} - a_{i + r -1})$$ are distinct. 

\begin{theorem}\label{main2}
	Let $A$ and $B$ be finite subsets of real numbers and suppose $A$ has distinct consecutive $r$--differences. Then $$|A+ B| \gg e^{-r(\log 2 + 1)} |A||B| ^{1/(r+1)}.$$ The implied constant is absolute. Also, there exist sets such that the above inequality is sharp up to the constant. 
\end{theorem}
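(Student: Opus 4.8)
The plan is to prove the lower bound by a counting argument generalizing Solymosi's, and to establish sharpness with an explicit de Bruijn construction. Write $S = A + B$, $N = |S|$, $k = |A|$, and list $S = \{s_1 < \cdots < s_N\}$ alongside $A = \{a_1 < \cdots < a_k\}$. For each $b \in B$ and $1 \le i \le k - r$, form the \emph{window} $W_{i,b} = \{a_i + b, a_{i+1}+b, \ldots, a_{i+r}+b\}$, an $(r+1)$-element subset of $S$. The first key step is an injectivity claim: the map $(i,b) \mapsto W_{i,b}$ is injective. Indeed, if $W_{i,b} = W_{i',b'}$ then matching elements in increasing order gives $a_{i+t} + b = a_{i'+t}+b'$ for $0 \le t \le r$; subtracting consecutive equations shows the difference tuples $(a_{i+1}-a_i, \ldots, a_{i+r}-a_{i+r-1})$ at $i$ and at $i'$ coincide, so $i = i'$ by the distinct consecutive $r$-differences hypothesis, and then $b = b'$. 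Hence $S$ contains at least $(k-r)|B|$ distinct $(r+1)$-subsets of the special form $W_{i,b}$.

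The second step controls these subsets by their \emph{span}. For fixed $b$, let $p_1^b < \cdots < p_k^b$ be the ranks in $S$ of $a_1 + b, \ldots, a_k + b$, and let $\sigma_{i,b} = p_{i+r}^b - p_i^b$ be the span of $W_{i,b}$. Writing each span as a sum of $r$ consecutive rank-gaps and noting that each rank-gap lies in at most $r$ windows, one obtains $\sum_{i} \sigma_{i,b} \le r(p_k^b - p_1^b) \le rN$ for each $b$, hence $\sum_{i,b}\sigma_{i,b} \le rN|B|$. By Markov's inequality at least half of the $(k-r)|B|$ windows have span at most $L := 2rN/(k-r)$. On the other hand, the number of $(r+1)$-subsets of $\{1,\ldots,N\}$ with span at most $L$ is at most $N\binom{L}{r}$, since such a subset is determined by its least rank (at most $N$ choices) together with $r$ further ranks among the next $L$ positions. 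Combining with the injectivity count gives $\tfrac12 (k-r)|B| \le N\binom{L}{r} \le N L^r/r!$; substituting $L = 2rN/(k-r)$ and using $r! \ge (r/e)^r$ rearranges to $N \gg (2e)^{-r}(k-r)|B|^{1/(r+1)}$, which is the stated bound since $e^{-r(\log 2+1)} = (2e)^{-r}$ and $k - r \asymp k$ (the regime $k = O(r)$ being routine).

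For sharpness, fix $q$ and take a de Bruijn sequence of order $r$ over the alphabet $\{1,\ldots,q\}$, namely a cyclic word of length $q^r$ in which every length-$r$ string occurs exactly once. Reading it linearly as the consecutive differences $a_{i+1}-a_i$, we obtain a set $A$ with $|A| \approx q^r$ whose consecutive $r$-difference tuples are precisely these windows, hence distinct. Since each symbol occurs $q^{r-1}$ times, $\operatorname{diam}(A) = \sum_i(a_{i+1}-a_i) = q^{r-1}\binom{q+1}{2} \approx q^{r+1}/2$. Taking $B = \{0,1,\ldots,m\}$ with $m \approx q^{r+1}$ confines $A + B$ to an interval of integers of length $\approx q^{r+1}$, so $|A+B| \ll q^{r+1} \approx |A|\,|B|^{1/(r+1)}$, matching the lower bound up to the constant.

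The main obstacle is the second step. The bare injection only yields $|A+B| \gg (|A||B|)^{1/(r+1)}$, and recovering the correct first power of $|A|$ requires exploiting that the windows of a single translate overlap in long runs; this is exactly what the budget $\sum_{i}\sigma_{i,b}\le rN$ encodes. Obtaining the clean constant $(2e)^{-r}$ rather than some unspecified $C_r$ hinges on routing this budget through Markov's inequality together with the elementary estimate $\binom{L}{r}\le L^r/r!$.
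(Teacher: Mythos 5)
Your proof is correct, and it diverges from the paper's at both stages in ways worth recording. For the lower bound, your injectivity claim is exactly the paper's key observation (in Proposition \ref{dcd} the tuples \eqref{tuple} are distinct for the same reason), but the localization device differs: the paper partitions $C=A+B$ into $t=\lfloor |D_r(A)|/(2r)\rfloor$ consecutive blocks $C_1,\ldots,C_t$ of nearly equal size and double counts $(r+1)$-subsets lying inside a single block, giving $(|D_r(A)|-(t-1)r)|B|\le \sum_u \binom{|C_u|}{r+1}$ and finishing with Stirling; you keep no partition and instead run a per-translate span budget $\sum_i \sigma_{i,b}\le rN$, then Markov, then a count of $(r+1)$-subsets of bounded span. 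Both are the same double count at heart, but your bookkeeping is tighter: your final inequality actually yields $|A+B|\ge \tfrac{1}{2e}(k-r)|B|^{1/(r+1)}$, an absolute constant with no exponential loss in $r$, strictly stronger than the stated $e^{-r(\log 2+1)}$ (the paper's stated constant is simply generous; its own method, done carefully, is also essentially $r$-independent). What the paper's formulation buys is generality: Proposition \ref{dcd} is phrased in terms of $|D_r(A)|$ for arbitrary $A$ and is reused for Theorem \ref{dcd2}; your argument generalizes identically by retaining one representative window per distinct tuple, though as written it uses all $k-r$ windows. For sharpness, your construction is genuinely different and simpler: you feed a standard de Bruijn word in as the gap sequence of $A$, so distinctness of consecutive $r$--differences is automatic, and you take $B$ an interval comparable to $\operatorname{diam}(A)$; the paper instead needs Lemma \ref{sequence} (an Eulerian-circuit variant of de Bruijn sequences with no repeated consecutive symbol), a Sidon alphabet, and a $\mathbb{Z}^2\to\mathbb{Z}$ projection --- machinery forced by the fact that there the de Bruijn symbols label the values $s_i$ rather than the gaps, so distinctness of value-tuples must be transferred to difference-tuples. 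What the paper's heavier construction buys is $|A|=|B|$, which is what feeds its remark that $\theta_r\le 2$ in Question \ref{ques1} via Ruzsa's triangle inequality. Amusingly, your asymmetric example does even better there: since $\operatorname{diam}(A)\approx q^{r+1}$ and $|A|\approx q^{r}$, taking $B=A$ gives $|A+A|\ll |A|^{1+1/r}$, i.e.\ $\theta_r\le (r+1)/r$, strictly improving the paper's bound for every $r\ge 2$.
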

The case $r=1$ is in \cite[Theorem 1.1]{So}. Our Theorem \ref{main2} applies to more general sets than addressed in \cite{So} but our bound is smaller by a power of the size of $B$ when $r > 1$. We also show  that Theorem \ref{main2} is best possible, up to the constant, utilizing ideas from the construction of de Bruijn sequences. 

Here we study only the non-symmetric version of finding lower bounds for $|A+B|$ where $A$ has distinct consecutive $r$--differences. We expect improvements to Theorem~\ref{main2} in the case $B = A$. 

\begin{ques}\label{ques1}
	What is the largest $\theta_r$ such that for every $A \subset \mathbb{Z}$ with distinct consecutive $r$--differences, one has $$|A+A| \gg_{r} |A|^{1 + \theta_r/(r+1)}.$$
\end{ques}

Theorem \ref{main2}, with $B = A$, asserts that $\theta_r \geq 1$, while we provide a construction below that shows $\theta_r \leq 2$. We remind the reader that any convex set has distinct consecutive $1$--differences. So Question \ref{ques1} generalizes the aforementioned question of Erd\"{o}s regarding convex sets. 

We remark that the notion of distinct consecutive $r$--differences extends naturally to $\mathbb{F}_p$. Here we order the elements of $A$ in accordance of their smallest positive representation in the integers and adopt the same definition for distinct consecutive $r$--differences as above. Then the proof of Theorem~\ref{main2} also proves the following. 

\begin{theorem} Let $A , B \subset \mathbb{F}_p$, and suppose that $A$ has distinct consecutive $r$--differences. Then either $A+B = \mathbb{F}_p$ or $$|A+ B| \gg e^{-r(\log 2 + 1)} |A||B| ^{1/(r+1)}.$$
\end{theorem}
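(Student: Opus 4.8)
The plan is to run the proof of Theorem~\ref{main2} essentially verbatim, replacing the order on $\mathbb{R}$ by a suitable linear order on $\mathbb{F}_p$, and to isolate the single place where the hypothesis $A+B\neq\mathbb{F}_p$ is needed. First I would recall the mechanism of Theorem~\ref{main2}. Write $A=\{a_1<\dots<a_k\}$, set $d_i=a_{i+1}-a_i$, and put $S=A+B$. The distinct consecutive $r$--differences hypothesis says the vectors $(d_i,\dots,d_{i+r-1})$ are distinct, so the map $(i,b)\mapsto(a_i+b,\dots,a_{i+r}+b)$, sending a pair $1\le i\le k-r$, $b\in B$ to an increasing $(r+1)$--tuple in $S$, is injective: the difference vector of the tuple recovers $i$, and then $b=s_0-a_i$. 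This yields exactly $(k-r)|B|$ distinct increasing $(r+1)$--tuples inside the ordered set $S$.

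The second step records that these tuples hug the diagonal. Ordering $S=\{s_1<\dots<s_N\}$ and letting $g_j(b)$ be the number of ranks separating $a_j+b$ from $a_{j+1}+b$, the tuple coming from $(i,b)$ has rank-width $\sum_{j=i}^{i+r-1}g_j(b)$, and since $\sum_j g_j(b)\le N$ for each fixed $b$, summing over $i$ and $b$ bounds the total width of all tuples by $rN|B|$. One then invokes the counting inequality that the number of increasing $(r+1)$--tuples of $[N]$ of width at most $L$ is at most $N\binom{L}{r}$: fitting $(k-r)|B|$ distinct tuples of total width at most $rN|B|$ forces $N\gg_r k|B|^{1/(r+1)}$, the constant $e^{-r(\log 2+1)}$ arising from Stirling-type estimates on the binomial coefficients. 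Since the statement claims this is exactly the proof of Theorem~\ref{main2}, I would quote this argument wholesale.

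The adaptation to $\mathbb{F}_p$ proceeds as follows. Order $A$ and $B$ by least nonnegative representatives, so that $A\subseteq\{0,\dots,p-1\}$ genuinely is an integer set with distinct consecutive $r$--differences; the difference vectors have entries in $\{1,\dots,p-1\}$ and so remain distinct after reduction, whence injectivity of $(i,b)\mapsto(a_i+b,\dots,a_{i+r}+b)$ survives mod $p$. The cyclic order on $\mathbb{F}_p$ now plays the role of the order on $\mathbb{R}$, and this is precisely where the dichotomy enters: if $A+B\neq\mathbb{F}_p$, I would pick $c\notin A+B$ and cut the cycle at $c$, turning the cyclic order on $S$ into a genuine linear order $s_1<\dots<s_N$ with a well-defined rank function, so that $\sum_j g_j(b)\le N$ and the width bound hold as before and the counting argument goes through unchanged. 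If instead $A+B=\mathbb{F}_p$ we are in the first alternative and there is nothing to prove.

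The hard part will be the bookkeeping of the cyclic order, since a single translate $a_1+b,\dots,a_k+b$ may wrap around $\mathbb{F}_p$, and one must verify that cutting at the missing residue $c$ produces rank-widths whose sum over $j$ is still at most $N$, i.e. that no translate winds around the circle more than once. I expect this to follow from the fact that the integer span $a_k-a_1<p$, so each translate occupies a proper arc contributing total displacement at most $N$, and that the hypothesis $c\notin A+B$ is exactly what lets all these arcs be measured consistently against one fixed cut, which is all the linearized argument requires.
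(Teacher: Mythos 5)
Your reconstruction of the real case is a legitimate variant of the paper's argument, though not identical to it: the paper proves Theorem~\ref{main2} via Proposition~\ref{dcd}, partitioning $C=A+B$ into $t=\lfloor D_r(A)/(2r)\rfloor$ consecutive blocks of nearly equal size and double counting $(r+1)$-element subsets lying inside a single block, rather than via your rank-width count; the two double counts are equivalent up to absolute constants, and your version does produce the stated constant, so this part is acceptable. The paper's own proof of the $\mathbb{F}_p$ statement is then the one-line remark that this argument applies verbatim once $A$ is ordered by smallest positive representatives.

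The genuine gap is in your $\mathbb{F}_p$ step, precisely at the sentence claiming that after cutting at $c\notin A+B$ ``the counting argument goes through unchanged.'' Cutting the circle at a missing residue does not linearize the translates: the hypothesis guarantees that no sum \emph{equals} $c$, not that no translate \emph{passes over} $c$. For a fixed $b$, the points $a_1+b,\dots,a_k+b$ traverse an arc of length $a_k-a_1<p$, and whenever $c$ lies strictly between $a_j+b$ and $a_{j+1}+b$ on the circle (which happens for some $j$ for many $b$, indeed for roughly $a_k-a_1$ values of $b$), the rank sequence in your cut order has a descent at $j$: the gap $g_j(b)$ is negative, the windows containing index $j$ are not increasing $(r+1)$-tuples, and your lemma counting increasing tuples of bounded width says nothing about them. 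Your final paragraph misdiagnoses what must be verified: ``no translate winds around the circle more than once'' is automatic from $a_k-a_1<p$ and is not the issue, and $c\notin A+B$ simply does not supply the monotonicity your linearized argument requires.

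The repair is easy but is a different mechanism from the one you assert. For each $b$ there is at most one descent, hence at most $r$ windows containing it; discard these (at most $r|B|$ windows in total, harmless once one disposes of the trivial case $k\le 2r$ via $|A+B|\ge|B|$), and observe that the sum of the \emph{positive} gaps for fixed $b$ is at most $2N$ rather than $N$ (the ranks increase, drop once, then increase), so the total width bound becomes $2rN|B|$ and your computation survives with an absolute-constant loss. Equivalently, and this is how the paper's block argument transplants, treat the partition of $C$ cyclically: there are then $t$ boundaries instead of $t-1$, each straddled by at most $r$ windows per $b$, so the lower bound $(D_r(A)-(t-1)r)|B|$ becomes $(D_r(A)-tr)|B|$, which the choice $t=\lfloor D_r(A)/(2r)\rfloor$ already absorbs. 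Note that after either repair the hypothesis $A+B\neq\mathbb{F}_p$ plays no role in the counting; it is not the engine that makes the argument run, contrary to the role your proposal assigns to it.
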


There is a generalization of Theorem 3 in \cite{So} for distinct consecutive $r$--differences, which requires the following definition. Let $A_1 , \ldots , A_d$ be nonempty finite subsets of real numbers all of cardinality $k$. We say that $A_1 , \ldots , A_d$ have {\em distinct $d$-tuples of consecutive $r$--differences} if the $(dr)$-tuples,
$$(a_{1,i+1} - a_{1,i} , \ldots , a_{1,i+r} - a_{1,i + r - 1} , \ldots , a_{d,i+1} - a_{d,i} , \ldots , a_{d,i+r} - a_{d,i + r - 1})$$ are distinct for $1 \leq i \leq k -r$. We have the following generalization of Theorem \ref{main2}. 

\begin{theorem}\label{multidim} Suppose $A_1 , \ldots , A_d \subset \mathbb{R}$ have finite size $k \geq 1$ and have distinct $d$-tuples of consecutive $r$--differences. Let $B_1 , \ldots B_d \subset \mathbb{R}$ be nonempty finite sets of real numbers of cardinality $\ell_1 , \ldots , \ell_d$. Then $$|A_1 + B_1| \cdots |A_d + B_d| \gg_{\beta , d} (k^{d r+1} \ell_1 \cdots \ell_d)^{1/(d(r+1))}.$$
\end{theorem}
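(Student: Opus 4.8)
The plan is to deduce Theorem~\ref{multidim} from the one--dimensional Theorem~\ref{main2} by collapsing the $d$ coupled sequences to a single real sequence through a generic linear functional. The structural point I would exploit is that, since each $A_j=\{a_{j,1}<\dots<a_{j,k}\}$ is listed in increasing order and all $d$ sequences share the index $i$, the points $\mathbf v_i:=(a_{1,i},\dots,a_{d,i})\in\R^d$ increase coordinatewise as $i$ grows. Consequently any linear functional $\pi_\lambda(x)=\sum_{j=1}^d \lambda_j x_j$ with $\lambda_j>0$ is strictly increasing along the sequence $\mathbf v_1,\dots,\mathbf v_k$, so it preserves both the ordering and the notion of consecutiveness.

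First I would set $\mathbf B:=B_1\times\dots\times B_d\subset\R^d$, so that $|\mathbf B|=\ell_1\cdots\ell_d$, and record the elementary inclusion $A+\mathbf B\subset (A_1+B_1)\times\dots\times(A_d+B_d)$, where $A:=\{\mathbf v_i:1\le i\le k\}$; this gives $|A_1+B_1|\cdots|A_d+B_d|\ge |A+\mathbf B|$. Since $\pi_\lambda$ is a function, $|A+\mathbf B|\ge |\pi_\lambda(A+\mathbf B)|=|A_\lambda+B_\lambda|$, where $A_\lambda:=\pi_\lambda(A)$ and $B_\lambda:=\pi_\lambda(\mathbf B)$. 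Thus it suffices to produce a single $\lambda$ for which $A_\lambda$ and $B_\lambda$ are favourable inputs to Theorem~\ref{main2}.

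Next I would fix $\lambda$ with positive coordinates lying outside a finite union of hyperplanes, chosen so that three things hold simultaneously: (i) $\pi_\lambda$ is injective on $\mathbf B$, whence $|B_\lambda|=\ell_1\cdots\ell_d$; (ii) the $k$ values $\pi_\lambda(\mathbf v_i)$ are distinct and, by positivity together with coordinatewise monotonicity, already sorted in the order $i=1,\dots,k$; and (iii) $A_\lambda$ has distinct consecutive $r$--differences. For (iii) note that the consecutive $r$--difference of $A_\lambda$ at index $i$ is the $r$--tuple whose $t$--th entry is $\sum_j \lambda_j (a_{j,i+t}-a_{j,i+t-1})$, a fixed linear image of the full $(dr)$--tuple attached to index $i$. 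Two such images coincide only on a hyperplane in $\lambda$, because the hypothesis that the $(dr)$--tuples are distinct guarantees that for every pair $i\ne i'$ at least one of the $r$ defining linear forms is nonzero. Each of (i)--(iii) excludes only a measure--zero set of $\lambda$, so a valid positive $\lambda$ exists.

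Applying Theorem~\ref{main2} to $A_\lambda,B_\lambda\subset\R$ then yields $|A_\lambda+B_\lambda|\gg_r k(\ell_1\cdots\ell_d)^{1/(r+1)}$, and chaining the inequalities above gives $|A_1+B_1|\cdots|A_d+B_d|\gg_r k(\ell_1\cdots\ell_d)^{1/(r+1)}$, which dominates the claimed bound $(k^{dr+1}\ell_1\cdots\ell_d)^{1/(d(r+1))}$ and hence proves Theorem~\ref{multidim}. I expect the only delicate point to be step (iii): one must check that the distinctness of the vector $(dr)$--differences genuinely survives a generic scalar projection, i.e.\ that the bad $\lambda$ form a finite union of hyperplanes rather than a positive--measure set, and this is exactly where the joint (rather than coordinatewise) distinctness hypothesis enters. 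If instead the intended hypothesis did not couple the sequences monotonically, the projection would fail to preserve consecutiveness and one would have to run the counting argument underlying Theorem~\ref{main2} directly in $\R^d$.
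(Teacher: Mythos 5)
Your proposal is correct, but it proves the theorem by a genuinely different route than the paper. The paper does not reduce to the one--dimensional case at all: it reruns the double--counting argument of Proposition~\ref{dcd} directly in $d$ dimensions, partitioning each $C_m=A_m+B_m$ into $t_m$ blocks of consecutive elements, counting the $(d(r+1))$--tuples $(a_{1,i}+b_{1,j_1},\ldots,a_{1,i+r}+b_{1,j_1},\ldots,a_{d,i}+b_{d,j_d},\ldots,a_{d,i+r}+b_{d,j_d})$ whose $m$-th block lies inside a single part of $C_m$ (these are distinct by the joint distinctness hypothesis), and optimizing $t_m\approx k/(2rd)$. Your projection argument is sound: since each $A_m$ is indexed in increasing order, a positive functional $\pi_\lambda$ preserves the common ordering, and for each pair $i\neq i'$ the joint distinctness hypothesis makes the coincidence locus of the projected $r$--tuples a proper hyperplane in $\lambda$--space, so a valid $\lambda$ exists; Theorem~\ref{main2} applied to $A_\lambda,B_\lambda$, together with $\pi_\lambda(A+\mathbf{B})=A_\lambda+B_\lambda$ and $A+\mathbf{B}\subset\prod_m(A_m+B_m)$, then gives $\prod_m|A_m+B_m|\gg_r k(\ell_1\cdots\ell_d)^{1/(r+1)}$, which dominates the stated bound because $k,\ell_m\geq 1$ and $(dr+1)/(d(r+1))\leq 1$. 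As for what each approach buys: yours is shorter, reuses Theorem~\ref{main2} as a black box, and already improves on the stated exponent $1/(d(r+1))$; the paper's direct count is stronger still in its dependence on $k$, since the same Stirling computation as in Proposition~\ref{dcd} yields $\prod_m|A_m+B_m|\gg_{r,d}(k^{dr+1}\ell_1\cdots\ell_d)^{1/(r+1)}$, i.e.\ $k^{(dr+1)/(r+1)}$ versus your $k^1$, a real gain once $d\geq 2$ (and consistent with the exponent $1/2$ in Solymosi's Theorem 3 for $r=1$; the $1/(d(r+1))$ in the statement undersells both arguments). The loss in your reduction is localized in the single step $|A+\mathbf{B}|\leq\prod_m|A_m+B_m|$, which throws away the product structure that the multi-dimensional double count exploits.
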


The proof of Theorem \ref{main2} can be used to obtain an upper bound for the size of distinct $r$--differences of the set $A$ (Proposition~\ref{dcd}). This upper bound is not sharp when the set $A$ has some additive structure. In particular, let $\alpha$ be a real irrational number and we consider the set of points 
\begin{align*}
	S_\alpha(N):=\{\{n\alpha\}: 1\leq n\leq N\} = \{a_1 < \ldots <  a_N\}\subset \mathbb{R}/\mathbb{Z}. 
\end{align*}
Here we identify $\mathbb{R} / \mathbb{Z}$ with $[0,1)$ and then use the natural ordering on $[0,1)$. 
Since $|A+A| \ll |A|$, the above theory suggests that $A$ has few distinct consecutive $r$--differences. In fact,
in 1957 Steinhaus conjectured that there are at most $3$ distinct consecutive $1$--differences in $S_\alpha(N)$. This was proved by S\'os \cite{Sos1,Sos2} as well as \'Swierczkowski \cite{Swier}. Now we consider the set of distinct consecutive $r$--differences in $S_\alpha(N)$ defined via
$$D_r(S_{\alpha}(N)) := \{ (a_{i+1} - a_i , \ldots , a_{i +r} - a_{i+ r -1}) : a_i\in S_{\alpha}(N)\},$$ 
where $a_{i+N}=a_i$. Since there are at most 3 distinct $1$--differences in $S_\alpha(N)$, there are at most $3^r$ distinct consecutive $r$--differences in $S_\alpha(N)$. However, we prove that the size of $ D_r(S_\alpha(N))$ is much smaller than $3^r$ due to the structure of $S_\alpha(N)$. 

\begin{theorem}\label{2r+1diff}
	There are at most $2r+1$ distinct consecutive $r$--differences in $S_\alpha(N)$. 
\end{theorem}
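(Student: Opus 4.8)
The plan is to leverage the classical three gap theorem as the engine. By the theorem of S\'os and \'Swierczkowski, the set $S_\alpha(N)$ has at most three distinct consecutive $1$--differences, and moreover these gaps obey a rigid combinatorial structure: there are at most two ``primitive'' gap lengths, say $L$ (long) and $S$ (short), together with their sum $L+S$, and the pattern in which these gaps appear as one traverses the circle is itself highly constrained. First I would recall this refined structure precisely: reading the gaps $g_1, g_2, \ldots, g_N$ (cyclically) around the circle, the sequence of gap types is a Sturmian-like word, so that the gap-length sequence is a concatenation governed by the continued fraction expansion of $\alpha$. The key point I want to extract is not merely that there are three gap values, but that the bi-infinite (cyclic) word on the alphabet $\{L, S, L{+}S\}$ has very low \emph{subword complexity}.

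The main reduction is this: a distinct consecutive $r$--difference is exactly a window of $r$ consecutive gaps, i.e.\ an $r$--letter factor (subword) of the cyclic gap word $w = w_1 w_2 \cdots w_N$. Hence $|D_r(S_\alpha(N))|$ equals the number of distinct length-$r$ factors of $w$, which is the subword complexity function $p_w(r)$. So the theorem reduces entirely to the bound $p_w(r) \le 2r+1$. The natural approach to such a bound is to analyze how factors extend: I would study the map sending each length-$r$ factor to its set of one-letter right extensions, and count \emph{right-special} factors (those admitting more than one right extension). A standard complexity argument gives $p_w(r+1) - p_w(r) = \sum_{\text{length-}r\text{ factors } u}(\deg^+(u) - 1)$, where $\deg^+(u)$ is the number of distinct letters that can follow $u$; thus $p_w(r+1)-p_w(r)$ counts the ``branching'' contributed by right-special factors. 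To get $p_w(r)\le 2r+1$ it suffices to show the word $w$ has at most two right-special factors of each length (equivalently, first difference of complexity at most $2$), with the base case $p_w(1)\le 3$ coming directly from the three gap theorem.

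The heart of the matter is therefore establishing that the gap word arising from $\{n\alpha\}$ admits at most two right-special factors at each length, which I expect to be the main obstacle. I would attack this using the geometric mechanism behind the three gap theorem itself: the three gap lengths and their cyclic arrangement come from the fact that the points $\{n\alpha\}$ are obtained by successively inserting new points, and a gap of length $L+S$ is split into an $L$ and an $S$ gap in a controlled order determined by the three-distance dynamics (equivalently, by the action of the rotation and the Ostrowski/continued-fraction renormalization). Concretely, the only ambiguity in predicting the next gap after reading a block of gaps occurs at the boundaries where a ``long'' composite gap has or has not yet been subdivided; I would argue these boundary phenomena produce exactly two sources of right-branching, which pins the complexity increment to at most $2$. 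Because the alphabet has size at most $3$, even in the degenerate cases where some gap lengths coincide this only lowers the count, so summing the increments telescopes to $p_w(r)\le 1 + 1 + 2(r-1) = 2r+1$.

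An alternative and possibly cleaner route, which I would develop in parallel as a fallback, bypasses the symbolic-dynamics language and instead argues directly on the circle. One partitions the circle by the $N$ points of $S_\alpha(N)$ and studies the map $x \mapsto (\text{lengths of the } r \text{ gaps immediately to the right of } x)$ as a function on the index set; since each consecutive $r$--difference is a value of a piecewise-constant map whose pieces are determined by the $r$-fold rotation $T^r_\alpha$ together with the original three-gap partition, the number of distinct values is bounded by the number of cells cut out by $r$ rotated copies of the three-gap partition. A careful count of how many new cells each successive rotation can create — at most two per step after the first, by the three-distance structure — yields the same bound $2r+1$. I expect the subword-complexity formulation to give the slickest writeup, with the geometric count serving to verify the crucial ``at most two right-special factors'' claim that is the genuine content of the theorem.
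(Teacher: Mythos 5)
Your reduction of the theorem to a subword-complexity bound is a legitimate reformulation: the consecutive $r$--differences are exactly the length-$r$ factors of the cyclic gap word $w$, and the three gap theorem gives the base case $p_w(1)\le 3$. But the proposal never proves the step that carries all the content, namely that the complexity increment $p_w(j+1)-p_w(j)$ is at most $2$ for every $j$. Both of your routes end in assertions rather than arguments: ``I would argue these boundary phenomena produce exactly two sources of right-branching'' and ``at most two per step after the first, by the three-distance structure'' are exactly the statements that need proof, and you acknowledge as much when you call the right-special-factor claim ``the genuine content of the theorem.'' There is also a concrete logical error in the reduction chain: on a three-letter alphabet, ``at most two right-special factors of each length'' is \emph{not} equivalent to ``increment at most $2$,'' since a single right-special factor can have three right extensions and two such factors would give an increment of $4$. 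What you actually need is that the total excess $\sum_u (\deg^+(u)-1)$ is at most $2$ at every length; already at length $1$ you must rule out, e.g., all three gap lengths being right-special, and nothing in the proposal does this. So as written the argument has a genuine gap, even though the target statement about the word $w$ is plausibly true.

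For comparison, the paper's proof avoids symbolic dynamics and the three gap theorem entirely (it recovers that theorem as the case $r=1$). It compares the $r$--difference starting at $\{n\alpha\}$ with the one starting at $\{(n-1)\alpha\}$: if $\{(a_i-1)\alpha\},\dots,\{(a_{i+r}-1)\alpha\}$ are still consecutive in $S_\alpha(N)$, the two $r$--differences coincide, so every distinct $r$--difference must arise from a configuration where this downward shift fails. The shift can fail only if some index $a_j$ equals $1$ (at most $r+1$ configurations) or if the point $\{N\alpha\}$ intervenes in one of the $r$ gaps (at most $r$ configurations, since any intervening point $\{a_k\alpha\}$ with $a_k<N$ would contradict consecutiveness of the original tuple after shifting up). This counts $(r+1)+r=2r+1$ directly, with no need to control branching length by length. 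If you want to salvage your approach, the honest path is to prove the excess-degree bound by an argument of essentially this shifting type, at which point the complexity machinery becomes scaffolding you can discard.
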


We also consider a dual problem studied by Slater in \cite{slater}. Given $\phi , \theta \in (0,1)$, let the set of returning times be $$R_\theta(\phi):=\{T\in \NN^+ : \{T\theta\}<\phi\} = \{T_1 < T_2 < \ldots \}.$$
In \cite{slater,slater2}, Slater proved that there are at most $3$ distinct consecutive $1$--differences in $R_\theta(\phi)$. We generalize this result to consecutive $r$--differences. 
\begin{theorem}{\label{returntimethm}}
	There are at most $2r+1$ distinct consecutive $r$--differences in $R_\theta(\phi)$.
\end{theorem}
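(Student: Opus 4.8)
The plan is to connect the two theorems via the classical duality between the sequence $\{n\theta\}$ and its returning times, reducing Theorem~\ref{returntimethm} to the same three-gap structure that underlies Theorem~\ref{2r+1diff}. The returning-time set $R_\theta(\phi) = \{T : \{T\theta\} < \phi\}$ records exactly those integers $T$ whose orbit point $\{T\theta\}$ lands in the interval $[0,\phi)$. Slater's classical observation is that the consecutive gaps $T_{j+1} - T_j$ in this set take at most three values, and these values are governed by the continued fraction expansion of $\theta$ (they are of the form $q$, $q'$, and $q+q'$ for certain denominators of convergents). My first step would be to recall this three-gap description precisely and to set up a bijective dictionary between the returning times and the points of $S_\theta$ restricted to $[0,\phi)$, so that a block of $r+1$ consecutive returning times corresponds to an $r$-tuple of consecutive $1$-gaps.

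Next I would mirror the argument of Theorem~\ref{2r+1diff}. The key structural input is that among the consecutive $1$-gaps $T_{j+1}-T_j$, only three values $L_1 < L_2 < L_3$ occur, and crucially $L_3 = L_1 + L_2$; moreover the \emph{pattern} in which these gaps appear is highly constrained --- one never sees two large gaps adjacent, and the combinatorics of the gap sequence is that of a Sturmian/substitution structure. I would want to show that knowing which gap-value $L_i$ occurs at a given position determines the rest of the $r$-tuple from only boundedly many choices, so that the number of distinct $r$-tuples $(T_{i+1}-T_i, \ldots, T_{i+r}-T_{i+r-1})$ grows only linearly in $r$ rather than exponentially. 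Concretely, I expect the count to be bounded by counting, for each of the three possible starting gaps and each possible ``phase'' within the Sturmian word, how many genuinely distinct continuations of length $r$ can arise; the telescoping/additivity $L_3 = L_1+L_2$ should collapse many apparent tuples into equal ones, yielding the bound $2r+1$.

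The main obstacle, I anticipate, is establishing the precise rigidity of the gap word for $R_\theta(\phi)$ --- that is, proving that once a gap has a certain value, the admissible next-gap values are so restricted that the number of length-$r$ extensions is at most linear. In the $\{n\alpha\}$ setting (Theorem~\ref{2r+1diff}) one has the clean three-gap theorem and its refinement describing exactly where each gap length sits; for the returning-time set one must invoke Slater's analogue, which is the dual three-distance theorem, and then extract the same additive relation and positional constraints. I would therefore first prove or cite a sharp form of Slater's theorem giving not just the three gap values but the transition rules between them, and then run an induction on $r$: assuming that $R_\theta(\phi)$ has at most $2(r-1)+1$ distinct consecutive $(r-1)$-differences, I would show that appending one more gap can create at most two genuinely new tuples. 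If that inductive step can be made to go through cleanly, the bound $2r+1$ follows immediately, exactly paralleling the proof of Theorem~\ref{2r+1diff}.
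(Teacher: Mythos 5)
Your proposal has the right outline --- induction on $r$ with Slater's three-gap theorem as the base case and an inductive step in which the count grows by at most $2$ --- and this is indeed the skeleton of the paper's proof. But the inductive step, which you yourself flag as ``the main obstacle'' and hope ``can be made to go through cleanly,'' is precisely the entire content of the theorem, and your proposal contains no mechanism that would deliver it. Knowing only that there are at most $2r-1$ distinct consecutive $(r-1)$--differences cannot, by itself, bound the number of $r$--differences by $2r+1$: a priori each $(r-1)$--tuple admits up to three extensions (by the gaps $a$, $b$, $a+b$), giving $3(2r-1)$. The paper's key idea, absent from your proposal, is to prove a \emph{stronger} inductive statement: the consecutive $(r-1)$--differences are in one-to-one correspondence with the cells of a partition of $[0,\phi)$ into at most $2r-1$ intervals, where $d_{r-1}(s)$ depends only on which interval contains $\{s\theta\}$. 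With $a,b$ minimal so that $\alpha:=\{a\theta\}<\phi$ and $\beta:=1-\{b\theta\}<\phi$, the successor map $\{s\theta\}\mapsto\{(s+d_1(s))\theta\}$ is a three-piece interval exchange on $[0,\phi)$ whose image pieces are $[\alpha,\phi)$, $[\phi-\beta,\alpha)$ and $[0,\phi-\beta)$; the $r$--difference of $s$ is then determined by which cell of the common refinement of this three-interval partition with the inductive $(2r-1)$-interval partition contains the successor point. Refining by the two interior cut points $\phi-\beta$ and $\alpha$ adds at most two cells, giving $2r+1$, and the interval structure is preserved for the next step. It is this connectedness (each tuple is realized by an interval of positions), not the additive relation $L_3=L_1+L_2$ or any ``collapsing'' of tuples, that produces the $+2$ increment; nor is the gap word Sturmian --- it is the coding of a three-interval exchange, whose factor complexity is $2r+1$ rather than the Sturmian $r+1$.

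Your alternative opening move, reducing to Theorem~\ref{2r+1diff} via a ``bijective dictionary'' between returning times and the points of $S_\theta$ in $[0,\phi)$, also does not go through as stated: Theorem~\ref{2r+1diff} concerns gaps of the full orbit $S_\alpha(N)$, not gaps of its restriction to a subinterval, and the clean correspondence between the two problems exists only for rational $\theta$ with $\phi$ a multiple of $1/q$ --- this is exactly the remark at the end of Section~4 of the paper, which notes that for general irrational $\theta$ and arbitrary $\phi$ ``more complications will appear'' depending on the continued fraction expansion of $\theta$. So that route would at best handle a special family of parameters and would still require a limiting argument your proposal does not supply.
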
                                     
\section{Distinct consecutive $r$--differences}
We first discuss the construction that shows Theorem \ref{main2} is best possible up to the constant. To do this, we utilize a lemma from graph theory that generalizes a construction of Ruzsa  presented in \cite{So}. 

\begin{lemma}\label{sequence} Let $S$ be any set. There exists a sequence $s_1 , \ldots , s_k$ of elements of $S$ (with repeats) such that 
	\begin{enumerate}
		\item [(a)]The ordered $(r+1)$-tuples $(s_{j} , \ldots , s_{j + r})$ are distinct for $1 \leq j \leq k$, where $s_{j+k} = s_j$,\label{one}
		\item [(b)]$k = |S| (|S| - 1)^{r}$,\label{two}
		\item [(c)]$s_j \neq s_{j+1}$, \text{for} $1 \leq j \leq k$.\label{three} 
	\end{enumerate}
\end{lemma}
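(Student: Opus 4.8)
The plan is to recast the problem as finding an Eulerian circuit in a suitable directed graph, a restricted analogue of the de Bruijn graph. Write $n = |S|$ and assume $n \geq 2$, the case $n \leq 1$ being vacuous since then $k = 0$. Call an $m$-tuple of elements of $S$ \emph{admissible} if no two of its consecutive entries coincide. First I would form the directed graph $G$ whose vertices are the admissible $r$-tuples and whose edges are the admissible $(r+1)$-tuples, where the edge $(x_1, \ldots, x_{r+1})$ runs from the vertex $(x_1, \ldots, x_r)$ to the vertex $(x_2, \ldots, x_{r+1})$; both endpoints are genuine vertices since any sub-block of an admissible tuple is admissible. A closed Eulerian circuit in $G$, traversing edges $e_1, \ldots, e_k$ with the head of $e_j$ equal to the tail of $e_{j+1}$ (indices cyclic), records a cyclic sequence $s_1, \ldots, s_k$ for which the window $(s_j, \ldots, s_{j+r})$ is exactly the edge $e_j$. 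Each edge is then used once, which is precisely (a); each edge is admissible, which gives (c); and the number of edges equals $k$, which will give (b).

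Next I would verify the two hypotheses of Euler's theorem. Counting admissible tuples by choosing the first entry freely and each later entry among the $n-1$ symbols differing from its predecessor, there are $n(n-1)^{r-1}$ vertices and $n(n-1)^r$ edges, the latter count already establishing the length claim (b). For the degree condition, the out-neighbours of $(x_1, \ldots, x_r)$ are obtained by appending any symbol $\neq x_r$, and its in-neighbours by prepending any symbol $\neq x_1$, so every vertex has in-degree and out-degree exactly $n-1$; in particular $G$ is balanced.

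The main step, and the one I expect to require the most care, is strong connectivity of $G$. I would prove it by exhibiting an explicit directed path between any two vertices $u = (u_1, \ldots, u_r)$ and $v = (v_1, \ldots, v_r)$. A directed step appends one new symbol and drops the first, so appending $v_1, v_2, \ldots, v_r$ in turn transports the window to $v$, provided each appended symbol differs from the current last entry. Since $v$ is admissible, the appends $v_2, \ldots, v_r$ are automatically legal, and the only possible obstruction is the first append, which is legal exactly when $v_1 \neq u_r$. If instead $v_1 = u_r$, I would first take a one-step detour by appending any symbol $c \neq u_r$ (possible since $n \geq 2$); because $v_1 = u_r \neq c$, the subsequent appends $v_1, \ldots, v_r$ are then all legal. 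In either case one checks inductively that every intermediate window remains admissible, so the entire path lies in $G$, proving strong connectivity.

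Finally I would invoke Euler's theorem: a strongly connected directed graph in which every vertex has equal in-degree and out-degree admits a closed Eulerian circuit. Reading off this circuit as above produces the desired sequence, and the three properties follow as indicated. As a sanity check on the degree and connectivity computations, note that for the base case $r = 1$ this $G$ is exactly the complete loopless digraph on $S$ underlying Ruzsa's construction.
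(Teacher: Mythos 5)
Your proof is correct and follows essentially the same route as the paper: the same de Bruijn-style digraph on admissible $r$-tuples, the same degree count, and the same appeal to Euler's theorem, with $s_j$ read off as the first coordinate along the circuit. The only difference is that you spell out the strong-connectivity argument (the append-with-detour path), which the paper dismisses as ``easy to see''; this is a welcome addition but not a different method.
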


We remark that if the last condition were eliminated and $k$ were replaced by $|S|^{r+1}$, then we would be in search of a de Bruijn sequence. These are known to exists and are well-studied. Indeed we modify a construction of de Bruijn sequences in the proof. 

\begin{proof}[Proof of Lemma \ref{sequence}]
	We define a directed graph $(V,E)$. We define $V$ to be all of the $ |S|(|S| - 1)^{r - 1}$ ordered tuples of size $r$ with elements from $S$ such that no two consecutive elements are the same. To define $E$, we say $x \to y$ if the last $r -1 $ elements of $x$ are the same (and in the same order) as the first $r - 1$ elements of $y$. Then the outdegree and indegree of any vertex is $|S| - 1$, and it is easy to see that $(V,E)$ is strongly connected. By a standard result in graph theory, there exists an Eulerian circuit in $(V,E)$, say $v_1 , \ldots , v_k$. Setting $s_j$ to be the first coordinate of $v_j$ for $1 \leq j \leq k$ gives the claim. 
\end{proof}

\begin{proof}[Proof of sharpness in Theorem \ref{main2}]
Now we are ready to show that Theorem \ref{main2} is sharp up to a constant. Let $S$ be any finite integer Sidon set and $s_1 , \ldots , s_k$ be the sequence of elements of $S$ as given by Lemma \ref{sequence}. We define sets $A , B \subset \mathbb{Z}^2$ via $$A := \{(i , s_i) : 1 \leq i \leq k\} , \ \ \ \ B := \{ (i , 0) : 1 \leq i \leq k\}.$$ Since $S$ is a Sidon set and by part (c) of Lemma \ref{sequence},
$$((i+1, s_{i+1}) - (i,s_i) , \ldots , (i+r, s_{i+r}) - (i+r - 1,s_{i+r-1})),$$ uniquely determines $$(s_i , \ldots , s_{i+r}).$$

By part (b) of Lemma \ref{sequence}, $(s_i , \ldots , s_{i+r})$ are distinct for $1 \leq i \leq k - r$. To achieve subsets of $\mathbb{Z}$ rather than $\mathbb{Z}^2$, we use the standard trick to define an injection $\phi : \mathbb{Z}^2 \to \mathbb{Z}$ via $$\phi(u,v) = Mu + v,$$ for an $M > 2 (\max S - \min S)$ chosen sufficiently large so that $|\phi(A)+\phi (B)| = |A+B|$. 
Thus $\phi(A)$ has the property of distinct consecutive $r$--differences. But $$|\phi(A) + \phi(B)| = |A + B| \leq 2k |S| \ll  |A||B|^{1/(r+1)} .$$ 
\end{proof}
We remark the set $\phi(A)$ as defined above is an example that shows $\theta_r \leq 2$ in Question \ref{ques1}. That is, we have $$|A+A| \ll |A|^{1 + 2/(r +1)}.$$ This follows from the additive version of Ruzsa's triangle inequality, which asserts $$|A+A||B| \leq |A+B|^2 \ll |A|^{2 + 2/(r +1)}.$$ Alternatively, one could compute $|A+A|$ explicitly to see that $|A|^{1 + 2/(r +1)}$ is the right order of magnitude of $|A+A|$.

We now move onto the proof of the inequality in Theorem~\ref{main2}, which  can be derived as a corollary in a more general setting. Given any set $A$ of size $k$, we let 

$$D_{r}(A) = \{(a_{i+1} - a_i , \ldots , a_{i+r} - a_{i + r -1}) : 1 \leq i \leq k - r\}.$$

\begin{prop}\label{dcd} Let $B$ be any set of size $\ell$ and $A$ as above. Then $$|A+B| \gg e^{-r (\log 2 + 1)} D_{r}(A) |B|^{1 / (r +1)}.$$
\end{prop}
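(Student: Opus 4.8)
The plan is to prove the equivalent form $|A+B|^{r+1} \gg_r |D_r(A)|^{r+1}|B|$ and to obtain it by a second--moment (Cauchy--Schwarz) count of $(r+1)$--tuples inside $(A+B)^{r+1}$. The sharpness construction is the guide: there $|A+B|^{r+1} \asymp |D_r(A)|^{r+1}|B|$, so the count must place the $(r+1)$--st power on $|D_r(A)|$ and only a single power on $|B|$. Concretely, I would fix a transversal $I \subseteq \{1,\dots,k-r\}$ containing exactly one index for each of the $|D_r(A)|$ distinct $r$--difference tuples, and consider the map $\Theta(i_0,\dots,i_r,b) = (a_{i_0}+b,\dots,a_{i_r}+b)$ from $I^{r+1}\times B$ into $(A+B)^{r+1}$. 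Its domain has size $|D_r(A)|^{r+1}|B|$, and writing $N=|A+B|$, Cauchy--Schwarz gives $|\mathrm{domain}|^2 \le |\mathrm{image}|\cdot F_2 \le N^{r+1}F_2$, where $F_2$ counts the coincidences $\Theta(i_0,\dots,i_r,b)=\Theta(i_0',\dots,i_r',b')$. Thus $N^{r+1}\ge |D_r(A)|^{2(r+1)}|B|^2/F_2$.

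The next step is to evaluate $F_2$. A coincidence forces $a_{i_m}-a_{i_m'} = b'-b =: c$ for every $m$, so with $\nu(c) = \#\{(p,q)\in I^2 : a_p-a_q=c\}$ and $r_{B-B}(c)=\#\{(b,b')\in B^2 : b'-b=c\}$ one gets $F_2 = \sum_{c}\nu(c)^{r+1}r_{B-B}(c)$. The diagonal term $c=0$ contributes exactly $\nu(0)^{r+1}r_{B-B}(0) = |D_r(A)|^{r+1}|B|$, which is precisely the square root of the target main term; if the whole sum were within a bounded factor of this diagonal, then substituting back and taking $(r+1)$--st roots would yield $N \gg_r |D_r(A)||B|^{1/(r+1)}$ at once. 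It is worth noting that the $r+1$ independent copies must be spent on the difference data and not on $B$: the alternative map $(i,b_0,\dots,b_r)\mapsto(a_i+b_0,\dots,a_{i+r}+b_r)$, using $r+1$ independent translates, only ever delivers $|D_r(A)|^{1/(r+1)}|B|$, which is weaker than the claim whenever $|B|<|A|$.

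Hence the entire content reduces to the off--diagonal estimate $\sum_{c\ne 0}\nu(c)^{r+1}r_{B-B}(c) \ll_r |D_r(A)|^{r+1}|B|$, and this is exactly where I expect the real difficulty and where the distinct consecutive $r$--difference hypothesis must genuinely be used. The hypothesis is clearly necessary: for an arithmetic progression every $r$--difference tuple coincides, the transversal collapses to a single index, and the method can say nothing nontrivial. In the de Bruijn sharpness example the surviving $c\ne 0$ terms correspond to a whole window of the gap sequence repeating, which distinctness forbids, so there the off--diagonal is bounded by a constant multiple of the diagonal; the hard part is to convert this qualitative vanishing into a quantitative bound on the correlation sums $\nu(c)$ for \emph{arbitrary} $B$, since distinctness of windows does not obviously bound $\nu(c)=r_{A-A}(c)$ pointwise and one must exploit its interaction with $r_{B-B}$. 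I expect the explicit constant $e^{-r(\log 2+1)}=(2e)^{-r}$ to emerge precisely from this bookkeeping, plausibly as a factor $2^{-r}$ from orderings among the $r$ gaps of a window together with a Stirling--type $e^{-r}$; pinning down this off--diagonal bound is the step I would budget the most effort for, and the point at which an entirely different (more geometric) argument might in the end be required.
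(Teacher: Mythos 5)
Your first step—the transversal $I$ and the observation that the distinctness hypothesis makes the window map $(i,b)\mapsto (a_i+b,\ldots,a_{i+r}+b)$ injective—is exactly the paper's key observation. But everything after that diverges, and the step you yourself flag as the ``real difficulty'' is not just unproven: it is false. The off-diagonal estimate $\sum_{c\neq 0}\nu(c)^{r+1}r_{B-B}(c)\ll_r |D_r(A)|^{r+1}|B|$ cannot hold with a constant depending only on $r$. Take $r=1$, $A=\{i^2: i\leq k\}$ (convex, so every index lies in the transversal and $\nu=r_{A-A}$ up to the diagonal), and $B=\{1,\ldots,2k^2\}$. Then $r_{B-B}(c)\geq |B|/2$ for every $c\in A-A$, while $\sum_{c\neq 0}\nu(c)^2$ is the (off-diagonal) additive energy of the squares, which is $\gg k^2\log k$; so the off-diagonal sum is $\gg |B|k^2\log k$, exceeding the diagonal $|D_1(A)|^2|B|\asymp k^2|B|$ by a factor $\log k$. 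Consequently Cauchy--Schwarz with your $F_2$ can only give $|A+B|\gg k^2/\sqrt{\log k}$ for this pair, short of the claimed $\gg k^2$. The structural reason is worth internalizing: the hypothesis of distinct consecutive $r$--differences controls only \emph{consecutive windows} of $A$, but the moment you pass from windows to arbitrary tuples in $I^{r+1}$ (which you must, to get the factor $|D_r(A)|^{r+1}$ into the first moment), the hypothesis says nothing about the resulting coincidence count, which is governed by global difference multiplicities $\nu(c)$ that can be badly unbounded even for convex sets.

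The paper's proof avoids second moments entirely and instead exploits the \emph{ordering} of $\mathbb{R}$, which your count discards. Sort $C=A+B$ and partition it into $t=\lfloor |D_r(A)|/(2r)\rfloor$ consecutive blocks $C_1,\ldots,C_t$ of nearly equal size. Count $(r+1)$-tuples of distinct elements lying in a single block. From above this is at most $\sum_{u=1}^{t}\binom{|C_u|}{r+1}\approx t\,(|C|/t)^{r+1}/(r+1)!$. From below, the window-translates $(a_i+b,\ldots,a_{i+r}+b)$ with $i\in J_A$, $b\in B$ are pairwise distinct by your injectivity observation, and---this is the point where locality replaces your energy bound---for each fixed $b$ these $|D_r(A)|$ tuples are translates of consecutive windows, so at most $(t-1)r$ of them straddle a block boundary; hence at least $(|D_r(A)|-(t-1)r)|B|\geq |D_r(A)||B|/2$ of them are counted. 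Comparing the two bounds and applying Stirling yields $|A+B|\gg e^{-r(\log 2+1)}|D_r(A)||B|^{1/(r+1)}$; in particular the constant $(2e)^{-r}$ comes from $\binom{|C|/t+1}{r+1}$ with $t\approx |D_r(A)|/(2r)$, not from any off-diagonal bookkeeping. So the missing idea is a pigeonhole-on-consecutive-blocks first-moment argument; your instinct that ``an entirely different (more geometric) argument might be required'' was correct, and this is that argument.
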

\begin{proof}[Proof of Theorem~\ref{main2}]
The first part of Theorem \ref{main2} follows immediately from Proposition \ref{dcd} by observing that if $A$ has the property of distinct consecutive $r$--differences, then $|D_{r}(A)| = k - r$. 
\end{proof}

\begin{proof} [Proof of Proposition \ref{dcd}]
If $|D_{r}(A)| \leq 2 r$, Proposition \ref{dcd} follows from $|A+B| \geq |B|$, so we suppose $|D_r(A)| > 2r$. 

For each $d \in D_{r}(A)$, we choose an $1 \leq i(d) \leq k - r$ so that $$d = (a_{i(d) + 1} - a_{i(d)} , \ldots , a_{i(d) + r} - a_{i(d) + r - 1}).$$ 

Denote $$J_A:=\{ i(d): d\in D_r(A) \}.$$

Let $C = A+B$ and partition $$C = C_1 \cup \ldots \cup C_t,$$ such that for $u < v$ every element of $C_u$ is less than every element of $C_v$. 

Let $$X_u = \{(x_1 , \ldots , x_{r+1} )\in (A+B)^{r+1} : x_1 , \ldots , x_{r+1} \in C_u \ {\rm are \ distinct}\}.$$

The proof relies on double counting
\begin{equation}\label{dub} |X| , \ \ \ X = \bigcup_{u=1}^t X_u.\end{equation}

For each $1 \leq u \leq t$, we have that $C_u$ contains at most ${|C_u| \choose r+1}$ subsets of size $r + 1$. Thus have the upper bound for \eqref{dub} $$|X| \leq \sum_{u=1}^t {|C_u| \choose r + 1}.$$

Now we move on to a lower bound for \eqref{dub}. The key observation is that the  $(r+1)$--tuples \begin{equation}\label{tuple} (a_i + b , \ldots , a_{i+r} + b) , \ \ i \in J_A , \ \ b \in B,\end{equation} are distinct. Indeed given a $(r+1)$-tuple in \eqref{tuple} , we may recover $$(a_{i + 1} - a_{i} , \ldots , a_{i + r} - a_{i + r - 1}),$$ which determines $a_i , \ldots , a_{i+r}$ by our definition of $J_A$. Now an $(r+1)$--tuple in \eqref{tuple} is in $X$, unless the elements do not lie in the same $X_u$. For a fixed $b$, at most $(t-1)r$ of the $(r+1)$--tuples do not lie in the same $X_u$. Allowing $b$ to vary, and noting there are $D_r(A)|B|$ elements of \eqref{tuple}, we find $$|X| \geq (D_{r}(A) - (t-1) r) |B|.$$

Putting the upper and lower bounds for \eqref{dub} together, we have $$(D_{r}(A) - (t-1) r) |B| \leq \sum_{u=1}^t {|C_u| \choose r + 1}.$$ We choose $t = \lfloor D_r(A)/(2 r) \rfloor$ (which by assumption is at least 1) and $C_1 , \ldots , C_t$ to differ in size by at most 1, which implies $||C_u| - |C|/t| \leq 1.$ Proposition \ref{dcd} follows from Stirling's formula and a straightforward calculation. 
\end{proof}

We now give an informal sketch of a proof of Theorem \ref{multidim} below, which is similar to Theorem \ref{main2}. We also refer the reader to the proof of Theorem 3 in \cite{So}.

\begin{proof}[Sketch of proof of Theorem \ref{multidim}]

The case $k < 2rd$ is trivial, so we assume $k \geq 2rd$. For $1 \leq m \leq d$, let $A_m = \{a_{m,1} , \ldots , a_{m,k}\}$, $B_m = \{b_{m,1} , \ldots , b_{m, \ell_m}\}$ and $C_m = A_m + B_m$. Partition $C_m = C_{m,1} \cup \ldots \cup C_{m,t_m}$ as in Proposition \ref{dcd}. Double count the number of $$(a_{1 i} + b_{1j} , \ldots , a_{1 , i+r} + b_{1j} , \ldots , a_{d ,i} + b_{d,j} , \ldots , a_{d , i+r} + b_{d,j}    ),$$ such that $a_{m ,i} + b_{m,j} , \ldots , a_{m, i+r} + b_{m,j}$ all lie in a single $C_{m,u}$. Similar to Theorem 3 in \cite{So}, this implies an inequality of the form 
$$(k - r \sum_{m=1}^d (t_m-1))\ell_1 \cdots \ell_d \leq \sum_{u_1 = 1}^{|C_1 |} \cdots \sum_{u_d= 1}^{|C_d|} {|C_{1, u_1}| \choose r+1} \ldots {|C_{d, u_d}| \choose r+1}.$$ Choosing $t_m = \lfloor k/(2rd) \rfloor$ and the $C_{m,j}$ to differ in size by at most 1 implies Theorem~\ref{multidim}. 
\end{proof}

\section{Distinct consecutive $r$--differences of $\{n\alpha\}$}\label{nalpha}

\begin{proof}[Proof of Theorem \ref{2r+1diff}]
	Recall from the introduction that $$S_\alpha(N):=\{\{n\alpha\}: 1\leq n\leq N\} = \{x_1 < \ldots <  x_N\}\subset \mathbb{R}/\mathbb{Z}, $$ and $$D_r(S_{\alpha}(N)) := \{ (x_{i+1} - x_i , \ldots , x_{i +r} - x_{i+ r -1}) : x_i\in S_{\alpha}(N)\}.$$ To obtain an upper bound for $\#D_r(S_{\alpha}(N))$. We consider the set 
	 \begin{align*}
D_r(\alpha,N):=\{(\{a_{i+1}\alpha\}-\{a_i\alpha\},\cdots,\{a_{i+r}\alpha\}-\{a_{i+r-1}\alpha\} ): \\ \{(a_{i}-1)\alpha\},\cdots,\{(a_{i+r}-1)\alpha\}  \\\text{ are not consecutive elements in } S_\alpha(N)\}, 
	\end{align*} which contains $D_r(S_{\alpha}(N))$. Thus to prove Theorem \ref{2r+1diff}, it is enough to give an upper bound on $\#D_r(\alpha,N)$. The case when  $\{a_{i}\alpha\},\cdots,\{a_{i+r}\alpha\} $ are consecutive elements in $S_\alpha(N)$ while $\{(a_{i}-1)\alpha\},\cdots,\{(a_{i+r}-1)\alpha\} $ are not consecutive elements in $S_\alpha(N)$ can only happen if

\begin{enumerate}
	\item $a_j-1=0$ for some $i\leq j\leq i+r.$
	\item there exists $a_k$ such that $\{a_k\alpha\}$ is between $\{(a_j-1)\alpha\}$ and $\{(a_{j-1}-1)\alpha\}$ for some $i+1\leq j\leq i+r.$
\end{enumerate}
The first case happens if and only if $a_j=1$ for some $i\leq j\leq i+r$. The second case happens if and only if $a_k=N$ for some $i+1\leq k\leq i+r$. Thus there are at most $2r+1$ distinct consecutive $r$--differences in the sequence $S_\alpha(N)$. 
\end{proof}
\noindent Next we give a description of the pattern of the consecutive $r$--differences in $S_\alpha(N)$.

\begin{lemma}\label{1Npattern}
Suppose $\{n_1\alpha\},\{n_2\alpha\},\cdots,\{n_k\alpha\}$ are consecutive elements in $S_\alpha(N)$. Then $\{(N+1-n_k)\alpha\},\cdots,\{(N+1-n_2)\alpha\} ,\{(N+1-n_1)\alpha\} $ are consecutive elements in $S_\alpha(N)$.
\end{lemma}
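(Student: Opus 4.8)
The plan is to exploit a reflection symmetry of the point set $S_\alpha(N)$ on the circle $\mathbb{R}/\mathbb{Z}$. Define the map $\sigma \colon \mathbb{R}/\mathbb{Z} \to \mathbb{R}/\mathbb{Z}$ by $\sigma(x) = \{(N+1)\alpha - x\}$. First I would check that $\sigma$ permutes $S_\alpha(N)$ according to the involution $n \mapsto N+1-n$. Indeed, writing $\{n\alpha\} = n\alpha - \lfloor n\alpha\rfloor$, we have $(N+1)\alpha - \{n\alpha\} = (N+1-n)\alpha + \lfloor n\alpha\rfloor$, so that $\sigma(\{n\alpha\}) = \{(N+1-n)\alpha\}$, the integer $\lfloor n\alpha\rfloor$ being absorbed by the fractional part. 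Since $\alpha$ is irrational the $N$ points are distinct, and as $n$ runs through $1, \ldots, N$ so does $N+1-n$; hence $\sigma$ is a bijection of $S_\alpha(N)$ onto itself.

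Next I would observe that $\sigma$ is an orientation-reversing isometry of the circle $\mathbb{R}/\mathbb{Z}$ (a reflection about the point $\{(N+1)\alpha\}/2$), so it reverses the cyclic order of points. Concretely, if $y_1, \ldots, y_k$ occur in increasing cyclic order around the circle, then $\sigma(y_k), \ldots, \sigma(y_1)$ occur in increasing cyclic order. Recalling that ``consecutive'' is meant cyclically (consistent with the convention $a_{i+N}=a_i$), I would apply this to the consecutive block $\{n_1\alpha\}, \ldots, \{n_k\alpha\}$, which is listed in increasing cyclic order. Combining with the identity $\sigma(\{n_j\alpha\}) = \{(N+1-n_j)\alpha\}$ from the first step, the images $\{(N+1-n_k)\alpha\}, \ldots, \{(N+1-n_1)\alpha\}$ then appear in increasing cyclic order, exactly as the lemma asserts.

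The one point requiring care is that order-reversal alone does not give the conclusion: I must also verify that \emph{consecutiveness} is preserved, i.e. that no element of $S_\alpha(N)$ lies strictly between two cyclically adjacent images. This follows from $\sigma$ being a bijection of $S_\alpha(N)$ that respects the arc structure of the circle: any point of $S_\alpha(N)$ strictly between $\sigma(\{n_{j}\alpha\})$ and $\sigma(\{n_{j+1}\alpha\})$ would be $\sigma(z)$ for some $z \in S_\alpha(N)$ strictly between $\{n_{j}\alpha\}$ and $\{n_{j+1}\alpha\}$, contradicting that the original points were consecutive. The main obstacle is thus purely bookkeeping: being consistent about cyclic versus linear order on $[0,1)$ and tracking the fractional parts and floor terms correctly. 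The conceptual content is entirely the reflection symmetry $n \mapsto N+1-n$.
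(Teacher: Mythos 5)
Your proposal is correct and follows essentially the same route as the paper: both use the permutation $\{j\alpha\} \mapsto \{(N+1-j)\alpha\}$ of $S_\alpha(N)$ (your $\sigma$, which the paper realizes via the identity $\{m\alpha\} = 1 - \{-m\alpha\}$) to reverse the order of the points, and both establish consecutiveness by the same pull-back contradiction: an element of $S_\alpha(N)$ lying strictly between two images would force an element strictly between the original consecutive points. If anything, your explicit treatment of the cyclic order on $\mathbb{R}/\mathbb{Z}$ is slightly more careful than the paper's linear-order inequalities, which gloss over the wraparound at $\{(N+1)\alpha\}$.
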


\begin{proof}
The map $\{j\alpha\} \mapsto \{(N+1-j) \alpha\}$ is a permutation of $S_\alpha(N)$. Since $\{m\alpha\}=1-\{-m\alpha\}$ and $\{n_1\alpha\}<\{n_2\alpha\}< \cdots<\{n_k\alpha\}$, it follows that $\{(N+1-n_1)\alpha\}<\{(N+1-n_2)\alpha\}<\cdots<\{(N+1-n_k)\alpha\}$. There cannot be an $m$ such that $\{m\alpha\}$ is between $\{(N+1-n_i)\alpha\}<\{(N+1-n_j)\alpha\}$, since it would follow that  $\{(N+1-m)\alpha\}$ is in between $\{n_j\alpha\}$ and $\{n_i\alpha\}$, a contradiction.
\end{proof}
\begin{corollary}
	Suppose $L_1\alpha,\cdots,L_t\alpha, \alpha, R_1\alpha,\cdots,R_k\alpha\subset\mathbb{R}/\mathbb{Z}$ are the consecutive terms around $\{\alpha\}$ in $S_\alpha(N)$. Then $(N+1-R_k)\alpha,\cdots, (N+1-R_1)\alpha, N\alpha, ((N+1-l _t)\alpha),\cdots,(N+1-L_1)\alpha\subset\mathbb{R}/\mathbb{Z}$ are consecutive terms around $\{N\alpha\}$.
\end{corollary}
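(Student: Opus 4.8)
The plan is to deduce this Corollary directly from Lemma \ref{1Npattern} by specializing the consecutive block to the one containing $\{\alpha\}$. First I would observe that the Corollary is precisely the statement of Lemma \ref{1Npattern} applied to the particular consecutive run
\[
L_1\alpha, \ldots, L_t\alpha, \alpha, R_1\alpha, \ldots, R_k\alpha,
\]
that is, to the index sequence $n_1 = L_1, \ldots, n_t = L_t, n_{t+1} = 1, n_{t+2} = R_1, \ldots$, where I have used that $\{\alpha\}$ corresponds to $n = 1$. Here I am reading the list in \emph{increasing} order as required by the hypothesis of the Lemma, so $L_1\alpha, \ldots, L_t\alpha$ sit below $\{\alpha\}$ and $R_1\alpha, \ldots, R_k\alpha$ sit above it in $\mathbb{R}/\mathbb{Z}$.

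The key step is to apply the reflection $j \mapsto N+1-j$ furnished by Lemma \ref{1Npattern}. The Lemma guarantees that applying this map to a consecutive block and then reversing the order produces another consecutive block, so the images
\[
(N+1-R_k)\alpha, \ldots, (N+1-R_1)\alpha, (N+1-1)\alpha, (N+1-L_t)\alpha, \ldots, (N+1-L_1)\alpha
\]
are consecutive terms in $S_\alpha(N)$, listed in increasing order. I would then simplify $(N+1-1)\alpha = N\alpha$, which is exactly the central term $\{N\alpha\}$ claimed in the Corollary, confirming that the reflected block is the consecutive run around $\{N\alpha\}$. The ordering bookkeeping is the only real content: because the reflection reverses order within the block, the terms that were to the right of $\{\alpha\}$ (the $R_i$) become the terms to the left of $\{N\alpha\}$ and vice versa, matching the asserted arrangement.

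I expect the main obstacle to be purely notational rather than mathematical: one must carefully track that the reversal sends $R_k$ to the leftmost position and $L_1$ to the rightmost, and verify that $\{N\alpha\}$ indeed lands in the central slot as the image of $\{\alpha\}$. There is a minor typo in the statement (the term written $(N+1-l_t)\alpha$ should read $(N+1-L_t)\alpha$), which I would silently correct. No new estimates or constructions are needed; the entire argument is an instance of Lemma \ref{1Npattern}, so the proof reduces to substituting the explicit labels and invoking the Lemma.
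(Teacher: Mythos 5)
Your proposal is correct and matches the paper's (implicit) argument exactly: the paper states this Corollary with no separate proof, treating it as an immediate instance of Lemma \ref{1Npattern} applied to the consecutive block $L_1\alpha,\ldots,L_t\alpha,\alpha,R_1\alpha,\ldots,R_k\alpha$ with index set $L_1,\ldots,L_t,1,R_1,\ldots,R_k$, so that the reflection $j\mapsto N+1-j$ sends the center $1$ to $N$ and reverses the roles of the $L_i$ and $R_i$. Your bookkeeping of the order reversal and the correction of the typo $(N+1-l_t)\alpha \to (N+1-L_t)\alpha$ are both right.
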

\begin{theorem}\label{rdiffpattern}
	Suppose $\alpha$ is irrational and $N$ is large enough so that there the $2r +1$ elements around $\alpha$ in $\mathbb{R}/\mathbb{Z}$ are all in $[0,1)$
	$$L_1\alpha,\cdots,L_r\alpha, \alpha, R_1\alpha,\cdots,R_r\alpha,$$ Let $$1=p_0<\cdots <p_i<p_{i+1}<\cdots< p_{2r},$$ be a reordering of the set $$\{1,L_1, L_2,\cdots,L_r, N+2-R_1,\cdots,N+2-R_r\}.$$ 
	Then $2r+1$ consecutive $r$--differences in $S_\alpha(N)$ are given by  $$d_r(\{p_i\alpha\}),
	 \ i=0,1,\cdots,2r,$$ where $d_r(x)$ denote the consecutive $r$--difference starting from $x$ in $S_\alpha(N)$ and $$d_r(\{n\alpha\})=d_r(\{p_i\alpha\}), \text{ for }p_i\leq n < p_{i+1}.$$ 
\end{theorem}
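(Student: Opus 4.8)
The plan is to track how the consecutive $r$--difference $d_r(\{n\alpha\})$ varies as $n$ runs through $1, \ldots, N$ and to show it is piecewise constant with exactly the asserted break points. The engine is the rotation $T : x \mapsto x + \alpha \pmod 1$, which preserves the cyclic order of $\mathbb{R}/\mathbb{Z}$ and, because $T\{j\alpha\} = \{(j+1)\alpha\}$, maps $S_\alpha(N) \setminus \{\{N\alpha\}\}$ bijectively and cyclic-order-preservingly onto $S_\alpha(N) \setminus \{\{\alpha\}\}$. Write the \emph{window} of $\{n\alpha\}$ for the block consisting of $\{n\alpha\}$ together with its $r$ right-neighbours in $S_\alpha(N)$; this is exactly the data entering $d_r(\{n\alpha\})$.

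First I would prove a translation-invariance statement: $d_r(\{(n+1)\alpha\}) = d_r(\{n\alpha\})$ whenever $T$ carries the window of $\{n\alpha\}$ onto the window of $\{(n+1)\alpha\}$. Indeed $T$ is an isometry, so it preserves every gap inside a window, and the two windows then yield the same $(r+1)$--tuple of consecutive gaps. The only way this can fail is the dichotomy already isolated in the proof of Theorem \ref{2r+1diff}: either the window of $\{n\alpha\}$ contains $\{N\alpha\}$ (which $T$ sends to $\{(N+1)\alpha\} \notin S_\alpha(N)$), or the window of $\{(n+1)\alpha\}$ contains $\{\alpha\}$ (the element present in $S_\alpha(N)$ but absent from $T(S_\alpha(N))$). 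Away from these two events the difference does not change.

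Next I would convert these events into arithmetic conditions on $n$. The point $\{\alpha\}$ lies in the window of $\{n\alpha\}$ precisely when $\{n\alpha\}$ equals $\{\alpha\}$ or one of its $r$ left-neighbours, that is when $n \in \{1, L_1, \ldots, L_r\}$ by the definition of the $L_i$. Likewise $\{N\alpha\}$ lies in the window of $\{n\alpha\}$ precisely when $n \in \{N, N+1-R_1, \ldots, N+1-R_r\}$, where I use Lemma \ref{1Npattern} and its corollary to identify the $r$ left-neighbours of $\{N\alpha\}$ as $\{(N+1-R_1)\alpha\}, \ldots, \{(N+1-R_r)\alpha\}$. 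Hence the value of $d_r$ changes as one passes from $\{n\alpha\}$ to $\{(n+1)\alpha\}$ only for $n+1 \in \{1, L_1, \ldots, L_r\}$ or $n+1 \in \{N+1, N+2-R_1, \ldots, N+2-R_r\}$. Using the cyclic identification $x_{i+N} = x_i$, which folds $N+1$ back to $1$, the set of indices at which a new value of $d_r$ begins is exactly
$$\{1, L_1, \ldots, L_r, N+2-R_1, \ldots, N+2-R_r\},$$
whose increasing reordering is $1 = p_0 < p_1 < \cdots < p_{2r}$.

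Finally I would assemble the conclusion: between consecutive break points the difference is unchanged, so $d_r(\{n\alpha\}) = d_r(\{p_i\alpha\})$ for $p_i \le n < p_{i+1}$, which is the assertion. For $N$ large the $2r+1$ numbers above are distinct, giving $2r+1$ intervals; together with the upper bound of Theorem \ref{2r+1diff} this shows the $d_r(\{p_i\alpha\})$ account for all consecutive $r$--differences in $S_\alpha(N)$. The main obstacle is the translation-invariance step, where one must verify that, in the absence of the two boundary events, $T$ really does carry the window of $\{n\alpha\}$ onto that of $\{(n+1)\alpha\}$ as ordered blocks; this requires controlling the $0/1$ wraparound and is exactly where the hypothesis that the $2r+1$ points around $\{\alpha\}$ all lie in $[0,1)$ is used. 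Additional care is needed to avoid off-by-one errors and double-counting when the two index sets meet or when $N+1$ is folded to $1$.
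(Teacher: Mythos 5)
Your proposal is correct and takes essentially the same route as the paper's own proof: piecewise constancy of $d_r(\{n\alpha\})$ under the rotation $x \mapsto x+\alpha$, with breaks only when a window meets $\{\alpha\}$ or $\{N\alpha\}$, the left-neighbours of $\{N\alpha\}$ being identified via Lemma \ref{1Npattern}, yielding the break set $\{1, L_1, \ldots, L_r, N+2-R_1, \ldots, N+2-R_r\}$. Your write-up merely makes explicit the translation-invariance/window argument that the paper states tersely.
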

\begin{proof}
	The $2r+1$ consecutive differences are determined by the sequence 	$$L_1\alpha,\cdots,L_r\alpha, \alpha, R_1\alpha,\cdots,R_r\alpha.$$ For $r+1$ of them, the consecutive $r$--differences are given by $r+1$ consecutive numbers in the list. Thus $L_1,L_2,\cdots,1$ determines the $r+1$ consecutive $r$--differences in $S_\alpha(N)$, which are given by $d_r(\{L_t\alpha\})$ for $t=1,\cdots,r$ and $d_r(\{\alpha\})$. The remaining $r$ of the consecutive $r$--differences in $S_\alpha(N)$ are determined by $r+1$ consecutive numbers around $N\alpha$. From Lemma \ref{1Npattern}, the $r$ neighbours around $N\alpha$ in $\mathbb{R}/\mathbb{Z}$ are $$(N+1-R_r)\alpha,\cdots,(N+1-R_1)\alpha, N\alpha, (N+1-L_r)\alpha,\cdots,(N+1-L_1)\alpha.$$ Thus each consecutive $r$--difference is given by $r+1$ of the consecutive numbers in $$(N+1-R_r)\alpha,\cdots,(N+1-R_1)\alpha, (N+1-L_r)\alpha,\cdots,(N+1-L_1)\alpha,$$ which is determined by $(N+1-R_r)\alpha,\cdots,(N+1-R_1)\alpha$. In fact, they are given by $d_r(\{(N+2-R_l)\alpha\})$, where $l=1,\cdots,r$. In summary,  $$D_r(S_\alpha(N))=\{d_r(\{\alpha\}),d_r(\{L_1\alpha\}),\cdots,d_r(\{L_r\alpha\}), d_r(\{N+2-R_1\alpha\}),\cdots,d_r(\{N+2-R_r\alpha\})\}$$ gives the $2r+1$ consecutive $r$--differences in $S_\alpha(N)$, and 
	$$d_r(\{n\alpha\})=d_r(\{(n+m)\alpha\}),$$ as long as $n+m\leq N$ and $n+m$ doesn't belong to $$\{1,L_1,\cdots,L_r,N+2-R_1\cdots,N+2-R_r\}.$$
	 So for any $p_i\leq n<p_{i+1}$, we have $n-p_i\geq 0$ thus  $d_r(\{n\alpha\})=d_r(\{p_i\alpha\})$.   
	\end{proof}
	\begin{example}
		Take $\alpha=\log_{10}2$, $r=3$, and $N=100$. The $r$ neighbours around $\alpha$ are $$74\alpha, 84\alpha, 94\alpha, \alpha, 11\alpha, 21\alpha, 31\alpha\subset\mathbb{R}/\mathbb{Z}.$$ Applying Theorem \ref{rdiffpattern}, $$\{1,71,74,81,84,91,94\}$$ determines the $7$ distinct consecutive $3$--differences for $S_{\log_{10}2}(100)$. And given any $1\leq n\leq 100$, $d_3(\{n\alpha\})$ can be found by determining which of the following intervals $n$ belongs to $$[1,70],[71,73],[74,80],[81,83],[84,90],[91,93],[94,100].$$ 
	\end{example}
	\begin{theorem}
		Let $$S_{\alpha,\lambda_1,\cdots,\lambda_k}(N_1,\cdots,N_k) := \{\{\alpha n_i+\lambda_i \}|1\leq n_i\leq N_i, i=1,\cdots,k\}.$$ There are at most $(2r+1)k$ distinct consecutive $r$--differences in $S_{\alpha,\lambda_1,\cdots,\lambda_k}(N_1,\cdots,N_k)$. 
	\end{theorem}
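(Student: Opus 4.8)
The plan is to reduce the statement for the union of $k$ shifted orbits to the single-orbit case already settled in Theorem~\ref{2r+1diff}. The set $S_{\alpha,\lambda_1,\dots,\lambda_k}(N_1,\dots,N_k)$ is the union of $k$ individual point sets $T_i := \{\{\alpha n_i + \lambda_i\} : 1 \le n_i \le N_i\}$, each of which is a shifted copy of a single-orbit configuration $S_\alpha(N_i)$ translated by $\lambda_i$ in $\mathbb{R}/\mathbb{Z}$. First I would establish that each $T_i$ inherits the "at most $2r+1$ distinct consecutive $r$--differences" bound from Theorem~\ref{2r+1diff}, since a rigid rotation by $\lambda_i$ preserves all gaps and hence all consecutive $r$--differences; so each orbit contributes at most $2r+1$ distinct difference tuples when measured \emph{within its own ordering}.

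The main obstacle is that consecutive $r$--differences in the \emph{merged} ordering are not the same as those in the individual orderings: when the $k$ sorted lists are interleaved on the circle, a window of $r+1$ consecutive merged points may straddle boundaries where the enclosing point jumps from one orbit $T_i$ to another $T_{i'}$. So I would carefully classify the merged windows. The key step is a bookkeeping argument: as one traverses $\mathbb{R}/\mathbb{Z}$, the function "which orbit does the current point belong to" is piecewise constant, and I would track the points where a new consecutive $r$--difference can first appear. Following the structure of the proof of Theorem~\ref{2r+1diff}, a new difference tuple can only be introduced at a position where the $r$--window's extension by one step to the left changes its combinatorial type; within each orbit this happens at the $2r+1$ special indices identified in Theorem~\ref{rdiffpattern} (the indices $1$, the $L_j$, and the $N+2-R_j$).

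Concretely, I would argue that each of the $k$ orbits $T_i$ can introduce at most $2r+1$ genuinely new consecutive $r$--difference tuples into the merged sequence, exactly as in the single-orbit count, because the set of "left-extension breakpoints" contributed by orbit $i$ has size at most $2r+1$ by Theorem~\ref{2r+1diff} applied to $T_i$. Summing over the $k$ orbits, the total number of distinct consecutive $r$--differences in $S_{\alpha,\lambda_1,\dots,\lambda_k}(N_1,\dots,N_k)$ is at most $(2r+1)k$. The delicate point to verify is that interleaving genuinely does not manufacture \emph{more} than $2r+1$ new tuples per orbit: one must confirm that a merged window whose points all lie in distinct orbits still corresponds, via its gap vector, to a tuple that is determined by the breakpoint data of a single orbit rather than being an entirely new combinatorial configuration. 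I expect this to be the crux, and I would handle it by noting that the merged $r$--difference starting at a point $x$ is determined by the first point of each orbit lying at or above $x$, a piece of data that changes only finitely often and only at the union of the per-orbit breakpoint sets, yielding the bound $(2r+1)k$.
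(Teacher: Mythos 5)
Your headline count (at most $2r+1$ genuinely new tuples per orbit, hence $(2r+1)k$ in total) agrees with the paper's, but the step on which everything rests is not proved, and the justifications you offer cannot close it. Theorem \ref{2r+1diff} applied to a single orbit $T_i$ bounds the distinct $r$--differences of $T_i$ \emph{in its own ordering}; it says nothing about ``left-extension breakpoints'' of windows in the \emph{merged} ordering, because a merged window's entries come from several orbits and its type is governed by how the orbits interleave, not by the internal gap structure of any one $T_i$. For the same reason the breakpoint indices of Theorem \ref{rdiffpattern} (namely $1$, the $L_j$, the $N+2-R_j$) are irrelevant here: they are defined through the neighbours of $\alpha$ \emph{inside one orbit}, whereas in the union the neighbours of $\alpha+\lambda_j$ are typically points of other orbits. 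Your fallback observation --- that the merged $r$--difference starting at $x$ is determined by the first point of each orbit lying at or above $x$ --- is true but quantitatively useless: that $k$-tuple of points changes every time $x$ crosses \emph{any} element of the union, i.e.\ on the order of $N_1+\cdots+N_k$ times, not $(2r+1)k$ times. So the crux remains assumed rather than proven: you never establish that each orbit creates at most $2r+1$ breakpoints in the merged ordering, and your opening reduction (each $T_i$ internally has at most $2r+1$ distinct $r$--differences) is never actually used in a valid way.

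The missing idea, which is how the paper argues, is to run the shift argument of Theorem \ref{2r+1diff} directly on the union rather than orbit by orbit. Write the union as $a_1 < \cdots < a_N$ and compare the window $(a_i, \ldots, a_{i+r})$ with its translate $(a_i - \alpha, \ldots, a_{i+r} - \alpha)$; each $a_m = \{n\alpha + \lambda_j\}$ translates to $\{(n-1)\alpha + \lambda_j\}$, a point of the same orbit unless $n=1$. If all translated points lie in the union and still form a consecutive block, the $r$--difference tuple at $a_i$ already occurs at the translated window, so genuinely new tuples arise only at failure windows. There are exactly two failure modes, each attached to a single point per orbit: (a) some entry of the window equals $\alpha+\lambda_j$ (orbit $j$'s first point, which has no predecessor under the shift); this affects at most $r+1$ windows per orbit; (b) the point $N_j\alpha+\lambda_j$ (orbit $j$'s last point, whose successor $(N_j+1)\alpha+\lambda_j$ is absent from the set) falls strictly inside one of the $r$ gaps of the translated window, destroying consecutiveness; this affects at most $r$ windows per orbit. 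Hence at most $(r+1)k + rk = (2r+1)k$ windows can carry a new tuple. Your per-orbit count of $2r+1$ is thereby recovered, but as $(r+1)+r$ from these two failure modes in the merged set --- not from Theorem \ref{2r+1diff} applied to each $T_i$ separately.
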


\begin{proof}
We sketch the proof which is similar to the case when $k=1$ as in Theorem \ref{2r+1diff}. Let $N = N_1 \cdots N_k$ and denote the set  $$S_{\alpha,\lambda_1,\cdots,\lambda_k}(N_1,\cdots,N_k):= \{a_1 < \ldots < a_N\}.$$ Then the distinct consecutive $r$--differences can be represented by the $(r+1)$-tuple $(a_i,a_{i+1},\cdots,a_{i+r})$ such that $$a_{i}-\alpha , \cdots , a_{i +r} - \alpha$$ are not consecutive elements in $S_{\alpha,\lambda_1,\cdots,\lambda_k}(N_1,\cdots,N_k)$. This can only happen if one of the coordinates of the tuple $(a_i,a_{i+1},\cdots,a_{i+r+1})$ is of the form $\alpha+\lambda_j$ for some $j$, or there is a point of the form $N_j\alpha+\lambda_j$ between $a_{i}$ and $a_{i+1}$. This gives at most $2r+1$ $r$-tuples  $(a_i,a_{i+1},\cdots,a_{i+r})$ for each $j$. 
\end{proof}

\begin{theorem}\label{dcd2}
	Let $B$ be a finite subset of $\mathbb{R}/\mathbb{Z}$, then any subset $A$ of $B$ has at most $$C_r|B|^{1-\frac{1}{r+1}}\frac{|A+B|}{|B|}+r$$ distinct consecutive $r$--differences for some $C_r > 0$. One may choose $C_r=\frac{2r^{1-\frac{1}{r+1}}}{(r+1)!^{\frac{1}{r+1}}}$.
\end{theorem}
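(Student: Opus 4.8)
The plan is to run the double counting of Proposition~\ref{dcd} in reverse. Instead of using a lower bound on the number of distinct consecutive $r$--differences to push $|A+B|$ up, I would fix $|A+B|$ and solve the very same inequality for the number of differences, this time carrying the implied constant explicitly and transplanting every step from the line to the circle $\mathbb{R}/\mathbb{Z}$. Write $A=\{a_1<\cdots<a_k\}$ in the $[0,1)$--ordering, put $S=|A+B|$ and $\ell=|B|$, and let $D=|D_r(A)|$ be the quantity to be bounded.

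First I would peel off the wrap-around windows, which is where the additive $+r$ comes from. On the circle there are $k$ cyclic windows $(a_i,\ldots,a_{i+r})$ (indices mod $k$), and exactly $r$ of them wrap past the cut between $a_k$ and $a_1$; these realise at most $r$ distinct differences. Writing $D_{\mathrm{lin}}$ for the number of distinct differences realised by the $k-r$ non-wrapping windows, one has $D\le D_{\mathrm{lin}}+r$, so it suffices to prove $D_{\mathrm{lin}}\le C_r\,S/\ell^{1/(r+1)}$.

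To bound $D_{\mathrm{lin}}$ I would repeat the argument of Proposition~\ref{dcd} almost verbatim. Choose one non-wrapping index $i(d)$ per distinct linear difference $d$, forming $J_A$ with $|J_A|=D_{\mathrm{lin}}$; exactly as in Proposition~\ref{dcd}, the $(r+1)$--tuples $(a_i+b,\ldots,a_{i+r}+b)$ with $i\in J_A$ and $b\in B$ are pairwise distinct, so there are $D_{\mathrm{lin}}\,\ell$ of them. Partition $A+B$ into $t$ cyclic arcs $C_1,\ldots,C_t$ differing in size by at most $1$; each tuple lies inside a single arc unless it straddles one of the $t$ arc-boundaries, and for a fixed $b$ a given boundary is straddled by at most $r$ tuples (it falls in one of the $r$ gaps of the window), so at most $tr$ tuples fail per $b$. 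Double counting the $(r+1)$--subsets contained in a single arc then yields $(D_{\mathrm{lin}}-tr)\ell \le \sum_{u=1}^{t}\binom{|C_u|}{r+1}$. Using $\binom{m}{r+1}\le m^{r+1}/(r+1)!$ together with the near-equal sizes $|C_u|\approx S/t$ bounds the right-hand side by $S^{r+1}/(t^{r}(r+1)!)$, and the choice $t=\lfloor D_{\mathrm{lin}}/(2r)\rfloor$ (so that $D_{\mathrm{lin}}-tr\ge D_{\mathrm{lin}}/2$) reduces everything to the algebraic inequality $D_{\mathrm{lin}}^{r+1}\le 2^{r+1}r^{r}S^{r+1}/((r+1)!\,\ell)$, whose $(r+1)$-st root is precisely $C_r\,S/\ell^{1/(r+1)}$ with $C_r=2r^{1-1/(r+1)}/((r+1)!)^{1/(r+1)}$.

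I expect the main obstacle to be the circular bookkeeping combined with extracting the exact constant. Two points need care. First, one must check that the per-$b$ straddle count really is at most $tr$ once the arcs and the windows both live on $\mathbb{R}/\mathbb{Z}$: the cut point $\{a_k+b,a_1+b\}$ moves with $b$, so no single boundary can be discarded uniformly, and I would simply accept the safe bound $tr$ (the discrepancy with the $(t-1)r$ of the line gets absorbed). Second, the choice $t=\lfloor D_{\mathrm{lin}}/(2r)\rfloor$ vanishes in the degenerate range $D_{\mathrm{lin}}\le 2r$; there the claimed bound follows directly from $S/\ell^{1/(r+1)}\ge \ell^{r/(r+1)}\ge (D_{\mathrm{lin}}+r)^{r/(r+1)}$, valid since $A\subseteq B$ forces $\ell\ge k\ge D_{\mathrm{lin}}+r$, reducing to the elementary estimate $3^{r}r^{r-1}\ge (r+1)!$. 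The factor $2$ appearing in $C_r$ is exactly the slack incurred by taking $t$ proportional to $D_{\mathrm{lin}}/(2r)$ rather than the continuous optimum $S/((r+1)!\,\ell)^{1/(r+1)}$; it is lossless at $r=1$, where $C_1=\sqrt{2}$ recovers Solymosi's constant.
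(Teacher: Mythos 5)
Your strategy is exactly the one the paper intends: Theorem \ref{dcd2} is stated with its proof omitted as ``nearly identical to that of Proposition \ref{dcd}'', and most of your circle-specific bookkeeping is right (the $+r$ from the $r$ wrap-around windows, the straddle count $tr$ in place of $(t-1)r$, and the degenerate range $D_{\mathrm{lin}}\le 2r$ via $\ell\ge D_{\mathrm{lin}}+r$ and $(r+1)!\le 3^r r^{r-1}$). But there is one genuine gap, and it sits precisely at the point where the circle differs from the line: the inequality $(D_{\mathrm{lin}}-tr)\ell\le\sum_{u}\binom{|C_u|}{r+1}$ is not justified by your survival criterion ``the tuple lies inside a single arc''. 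On the line, the tuples $(a_i+b,\ldots,a_{i+r}+b)$ are increasing, so distinct tuples inside one $C_u$ give distinct $(r+1)$-subsets; on the circle they are only \emph{cyclically} increasing, and distinct tuples can share their underlying set. Concretely, take $r=2$ and $A\supset\{0,\,0.2,\,0.6,\,0.8\}$, with the consecutive runs $W_1=(0,0.2,0.6)$ and $W_2=(0.2,0.6,0.8)$ (difference vectors $(0.2,0.4)$ and $(0.4,0.2)$, so both indices may be taken in $J_A$), and $b=0.6$, $b'=0$ in $B\supseteq A$: the tuples $W_1+b=(0.6,0.8,0.2)$ and $W_2+b'=(0.2,0.6,0.8)$ are distinct but have the same point set $\{0.2,0.6,0.8\}$, so if that set lies in a single arc your count misses one of them. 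Patching this by counting each subset with multiplicity $r+1$ costs a factor $(r+1)^{1/(r+1)}$ in the constant, and your factor-of-$2$ slack cannot absorb it: the ratio of the claimed $C_r$ to the continuous optimum $(r+1)/((r+1)!)^{1/(r+1)}$ is $2r^{r/(r+1)}/(r+1)$, which is smaller than $(r+1)^{1/(r+1)}$ for $2\le r\le 5$. So, as written, the argument proves the theorem for those $r$ only with a worse constant than stated.

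The repair is the criterion your own kill-count parenthetical already uses: declare a tuple dead if and only if some arc boundary lies in one of its $r$ window gaps, i.e.\ in one of the arcs from $a_{i+j-1}+b$ to $a_{i+j}+b$, $1\le j\le r$ --- not merely if its points land in different arcs. The kill bound is unaffected (for fixed $b$, a boundary lies in at most one gap of $A$ after translating back by $b$, hence kills at most $r$ windows of $J_A$; a boundary falling in the wrap gap of $A+b$ kills none), and now every surviving tuple has all $t$ boundaries inside its closing arc, so reading its arc $C_u$ from one endpoint the tuple appears in increasing order. This restores the injection from surviving tuples to $(r+1)$-subsets of the $C_u$'s, hence your inequality and your value of $C_r$. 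Note that in the example above the two colliding tuples can never both survive under this criterion: their closing arcs $(0.2,0.6)$ and $(0.8,0.2)$ are disjoint, so no placement of boundaries spares both. With this repair --- and with the ranges where the floors in $t=\lfloor D_{\mathrm{lin}}/(2r)\rfloor$ and the ceiling $\lceil S/t\rceil$ bite folded into your degenerate case, which is routine --- your argument is the paper's intended proof and delivers the stated constant.
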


We omit the proof, as it is nearly identical to that of Proposition \ref{dcd}. We remark that Theorem \ref{dcd2} is a generalization of Theorem 1 in \cite{antal}. We now show that up to the constant, Theorem \ref{dcd2} is best possible. Let $S = \{1 , \ldots , |S|\}$. By Lemma \ref{sequence}, there exists $s_1 , \ldots , s_k$ such that 

\begin{itemize}
\item The ordered $r$-tuples $(s_{j} , \ldots , s_{j + r-1})$ are distinct for $1 \leq j \leq k$, where $s_{j+k} = s_j$,
\item $k = |S| (|S| - 1)^{r-1}$,
\item for $1 \leq j \leq k$, $s_j \neq s_{j+1}$. 
\end{itemize}

We define a set $A = \{a_1 < \ldots < a_k\}$ where $$a_i := \sum_{j=1}^i s_j.$$ Then $A$ has distinct consecutive $r$--differences. Note that $a_k \leq |S|^{r+1}$, so we let $B = \{0 , \ldots , N\}$ where $N = |S|^{r+1}$, so that $A \subset B$. Note that $$|A| \asymp |S|^{r},  \ \ \ \ \  |B| = |S|^{r+1},$$ so that $|A| \asymp |B|^{1 - 1/(r+1)}$. To make these subsets of $\mathbb{R}/\mathbb{Z}$, we consider the map $\phi : \mathbb{Z} \to \mathbb{R}/\mathbb{Z}$ via $\phi (x) = x \alpha$ for a sufficiently small $\alpha$.

\section{Distinct consecutive $r$--differences of returning times}

We recall that for $0<\phi , \theta<1$, we have the set of returning times $$R_\theta(\phi) =\{T : \{T\theta\}<\phi\} = \{T_1 < T_2 < \ldots \}.$$

\begin{proof}[Proof of Theorem \ref{returntimethm}]
	We prove this theorem by induction on $r$. Let $s \in R_{\theta}(\phi)$ and $d_r(s)\in\mathbb{Z}^r$ such that $s$ is followed by $$s+d_r(s)^{(1)},s+d_r(s)^{(1)}+d_r(s)^{(2)},\cdots, s+\sum_{l=1}^rd_r(s)^{(l)}$$ in $R_\theta(\phi)$, where $d_r(s)^{(l)}$ denotes the $l^{th}$ coordinate of $d_r(s)$. When $r=1$, the problem was studied by Slater in \cite{slater}. Let $a,b$ be the least positive integers such that $$\alpha:=\{a \theta\}<\phi, \ \beta:=1-\{b\theta\}<\phi.$$ Then from the definition of $a,b$, we have $\phi>\max(\alpha,\beta)$ and $\phi\leq \alpha+\beta$. There are three types of $d_1(s)$ given as below
	\begin{equation}\left\{
		\begin{array}{lll}
			d_1(s)=a, & 0\leq \{s\theta\}<\phi-\alpha\\
			d_1(s)=a+b, & \phi-\alpha\leq \{s\theta\}<\beta\\
			d_1(s)=b, & \beta\leq \{s\theta\}<\phi .\\
		\end{array}\right.
	\end{equation}
	
	This means there is a partition of $[0,\phi)$ into three intervals, each of which determines uniquely $d_1(s)$ depending where $\{s\theta\}$ lies in the interval $[0,\phi)$. Now suppose, by induction, there are at most $(2r-1)$ distinct consecutive $(r-1)$--differences in $R_\theta(\phi)$ which are determined by a partition of $[0,\phi)$ into $(2r-1)$ intervals. That is to say there are numbers $0<g_i<\phi,\ i=1,\cdots, 2r-2$, such that  $$0=g_0<g_1\leq \cdots\leq g_{2r-2}<g_{2r-1}=\phi$$ gives a partition of $[0,\phi)$ into at most $(2r-1)$ intervals. There is an one-to-one correspondence between $[g_i,g_{i+1})$ and a consecutive $(r-1)$--difference in $R_\theta(\phi)$ (note that if there are less than $2r-1$ intervals then we allow $g_i=g_{i+1}$
). Now we consider a consecutive $r$--difference in $R_\theta(\phi)$. Depending on whether $\{s\theta\}$ lies in $[0,\phi-\alpha),[\phi-\alpha,\beta)$ or $[\beta,\phi)$, $s$ is either followed by $s+a,s+a+b,s+b$ in $R_{\theta}(\phi)$, respectively. Thus $\{ (s+d_1(s))\theta \}$ is determined as below:
		\begin{equation}\left\{
		\begin{array}{lll}
		d_1(s)=a, &  \alpha\leq \{(s+a)\theta\}<\phi,\\
		d_1(s)=a+b,& \phi-\beta\leq\{(s+a+b)\theta\}<\alpha,\\
		d_1(s)=b, &  0\leq\{(s+b)\theta\}<\phi-\beta.\\
		\end{array}\right.
		\end{equation}
		  It follows that $\phi-\beta , \alpha , g_0 , \ldots , g_{2r-1}$ gives rise to a partition of $[0,\phi)$ into at most $(2r+1)$ intervals, each of which corresponds uniquely to a consecutive $r$--difference, depending on which one of these intervals $\{(s+d_1(s)) \theta\}$ lies. In fact, depending on which intervals of $[g_i,g_{i+1})$,  $[0,\phi-\beta)$ (repectively $[\phi-\beta,\alpha),[\alpha,\phi)$) intersect, the possible $r-1$ returning times following $(s,s+b)$ (respectively $(s,s+a+b), (s,s+a)$) will be uniquely determined.
	
	 To illustrate, we give the example of $d_2(s)$. 
	For $d_2(s)$, there are three possibilities depending on $\alpha,\beta$ and $\phi$. 
	\begin{align}&0\leq \phi-\alpha<\phi-\beta<\beta<\alpha<\phi:\nonumber\\\
		&\left\{
		\begin{array}{lll}
			d_2(s)=(a,b), &  \{s\theta\}\in[0,\phi-\alpha)\\
			d_2(s)=(a+b,a+b), & \{s\theta\}\in[\phi-\alpha,2\beta-\alpha)\\
			d_2(s)=(a+b,b), & \{s\theta\}\in[2\beta-\alpha,\beta)\\
			d_2(s)=(b,a), &\{s\theta\}\in[ \beta,\phi-\alpha+\beta)\\
			d_2(s)=(b,a+b), & \{s\theta\}\in[\phi-\alpha+\beta,\phi)\\
		\end{array}\right.
		\\
	  &0\leq \phi-\beta<\phi-\alpha<\alpha<\beta<\phi:\nonumber\\\
		&\left\{
		\begin{array}{lll}
			d_2(s)=(a,a+b), & \{s\theta\}\in[0,\beta-\alpha)\\
			d_2(s)=(a,b), & \{s\theta\}\in[\beta-\alpha,\phi-\alpha)\\
			d_2(s)=(a+b,a), & \{s\theta\}\in[\phi-\alpha,\phi-2\alpha+\beta)\\
			d_2(s)=(a+b,a+b), & \{s\theta\}\in[\phi-2\alpha+\beta,\beta)\\
			d_2(s)=(b,a), & \{s\theta\}\in[\beta,\phi)\\
		\end{array}\right.
		\end{align}
		\begin{align}
		&0\leq \phi-\beta<\alpha<\phi-\alpha<\beta<\phi:\nonumber\\\
		&\left\{
		\begin{array}{lll}
			d_2(s)=(a,a), & \{s\theta\}\in[0,\phi-2\alpha)\\
			d_2(s)=(a,a+b), & \{s\theta\}\in[\phi-2\alpha,\beta-\alpha)\\
			d_2(s)=(a,b), & \{s\theta\}\in[\beta-\alpha,\phi-\alpha)\\
			d_2(s)=(a+b,a), & \{s\theta\}\in[\phi-\alpha,\beta)\\
			d_2(s)=(b,a), & \{s\theta\}\in[\beta,\phi)\\
		\end{array}\right.
	\end{align}
\end{proof}
	For rational $\theta$ there is a relation between the consecutive $r$--differences in $R_\theta(\phi)$ and $S_\theta(N)$, which can be found in \cite{slater}. Suppose $\theta=\frac{p}{q}$. Let $\alpha=\frac{p'}{q}$, where $pp'\equiv 1\pmod q$. Then we have  $$\{1\leq s\leq q: \{s\theta \}< \frac{N}{q}\}=q\cdot\{\{s'\alpha\}:1\leq s'\leq N\},$$ by mapping $s$ to $s\equiv sp'\pmod q$. Thus the consecutive $r$--differences of the set 
	$$\{n\leq q|\  \{s\theta\} <\frac{N}{q}\}$$ are $q$ times the consecutive $r$--differences of the set $$\{  \{s\alpha\} , 1\leq s\leq N\}.$$ For general $\theta$ and $\phi$, more complications will appear depending on representation of $\phi$ in terms of convergents of continued fraction expansion of $\theta$.
\section*{Acknowledgments} 
The authors would like to thank Antal Balog, Oliver Roche-Newton and Alexandru Zaharescu for useful discussions. We also thank Boris Bukh for useful suggestions to an earlier draft.

\end{document}